\theoremstyle{thmstyleone}%
\newtheorem{theorem}{Theorem}%
\newtheorem{proposition}[theorem]{Proposition}%
\newtheorem{lemma}{Lemma}[section]
\theoremstyle{thmstyletwo}%
\theoremstyle{thmstylethree}%
\newtheorem{definition}{Definition}%
\begin{document}

\title[Article Title ]{Modified Block Newton Algorithm for $\ell_0$- Regularized Optimization}

\author[1]{\fnm{} \sur{Yuge Ye}}\email{fujianyyg@163.com}

\author*[1]{\fnm{} \sur{Qingna Li}}\email{qnl@bit.edu.cn}

\affil[1]{\orgdiv{Department of Mathematics and Statistics}, \orgname{Beijing Institute of Technology}, \orgaddress{\street{No.5 Yard, ZhongGuanCun South Street}, \city{Beijing}, \postcode{100081}, \state{Beijing}, \country{China}} }

\abstract{In this paper, we propose a globally convergent Newton type method to solve $\ell_0$ regularized sparse optimization problem. In fact, a line search strategy is applied to the Newton method to obtain global convergence. The Jacobian matrix of the original problem is a block upper triangular matrix. To reduce the computational burden, our method only requires the calculation of the block diagonal. We also introduced regularization to overcome matrix singularity. Although we only use the block-diagonal part of the Jacobian matrix, our algorithm still maintains global convergence and achieves a local quadratic convergence rate. Numerical results demonstrate the efficiency of our method. }

\keywords{sparse optimization, global convergence, Newton method, proximal point mapping
}

\maketitle

\section{Introduction}\label{sec1}
\numberwithin{equation}{section}
In recent years, sparse optimization has become a hot topic of research due to its applications in compressed sensing \cite{candes2006robust, 1542412, donoho2006compressed}, machine learning \cite{wright2010sparse, yuan2012visual}, neural networks \cite{bian2012smoothing, dinh2020sparsity, lin2019toward}, signal and image processing \cite{bian2015linearly, chen2012non, elad2010sparse, elad2010role}, mtrix completion \cite{10.1145/2184319.2184343}. Among these, models with the $\ell_0 $ norm as a penalty term represent a classic approach, which can be expressed as the following $l_0$ regularized optimization,
\begin{align}\label{L0}
\min\limits_{x\in \mathbb{R}^n} f(x) + \lambda \|x\|_0,
\end{align}
where $f: \mathbb{R}^n \to \mathbb{R}$ is twice continuously differentiable and bounded from below, $\lambda > 0$ is the penalty parameter and $\|x\|_0$ is the $\ell_0$ norm of $x$, counting the number of non-zero elements of $x$.

Various methods have been proposed for problem \eqref{L0}, such as the iterative hard-thresholding algorithm (IHT, \cite{blumensath2008iterative}), the forward-backward splitting method (FBS, \cite{attouch2013convergence}), mixed integer optimization method (MIO, \cite{bertsimas2016best}), active set Barzilar
Borwein method (ABB, \cite{cheng2020active}), smoothing proximal gradient method (SPG, \cite{bian2020smoothing}).

We note that these methods are known as the first-order methods. Besides, several
second-order methods have been proposed, including primal dual active set (PDAS, \cite{Ito_2014}), the primal dual active set with continuation (PDASC, \cite{JIAO2015400}),  support detection and rootfinding (SDAR, \cite{huang2018constructive}).
Recently, Zhou et al. have introduced some promising second-order methods for $\ell_0$-norm sparse optimization, such as Newton hard-thresholding  pursuit (NHTP, \cite{zhou2021global}), Smoothing Newtons method for $\ell_{0/1}$ loss optimization (NM01, \cite{zhou2021quadratic}) and subspace Newton method for $\ell_0$-regularized optimization problem (NL0R, \cite{zhou2021newton}). Notably, NL0R is the first method that possesses both of global convergence and local quadratic convergence properties which aiming to solve problem \eqref{L0}. For Newton-type methods, the computation of the Jacobian matrix have always been a significant challenge. Inspired by NL0R, we are motivated to investigate a question which is whether we can develop a Newton-type method for problem \eqref{L0} that without computing the full Jacobian matrix? This motivate the work in this paper.

In this paper, we will propose a modified block Newton's method (MBNL0R) to solve problem \eqref{L0}. Notably, in NL0R, a block upper triangular Jacobian matrix needs to be computed at each iteration. To reduce the computational burden, we only compute the block diagonal Jacobian matrix. Due to the special structure of NL0R, our proposed approach retains the convergence properties inherent to NL0R. Additionally, we also introduced a regularization to overcome matrix singularity. In the convergence analysis, we prove the global convergence and local quadratic convergence rate for our algorithm. Numerical experiments demonstrate the effectiveness of the proposed algorithm.

The organization of the paper is as follows. In section 2, we introduce some preliminaries for problem \eqref{L0}. In section 3, we propose the so-called modified block Newton method (MBNL0R) for $\ell_0$-regularized problem. In section 4, we analyze the global convergence property and the local quadratic convergence rate. We conduct various numerical experiments in section 5 to verify the efficiency of the proposed method. Final conclusions are given in section 6.

Notations. For $x\in \mathbb{R}^n$, $|x| := ( |x_1|, |x_2|, \cdots, |x_n|)^{\top}$ denotes the absolute value of each component of $x $ and ${\rm{supp}} (x)$ be its support set. Let $\mathbb{N}_{n} := [1,2,\cdots, n]$ and $\| \cdot \|$ be the $\ell_2$ norm. We also use $[m]$ to denote $\mathbb{N}_{m}$. Given a set $T \subseteq \mathbb{N}_{n}$, we denote $|T|$ as its cardinality set and $\overline{T}$ as its complementary set. Given a marix $H \in \mathbb{R}^{m\times n}$, let $H_{T,J}$ represent its sub-matrix containing rows indexed by $T \subseteq \mathbb{N}_{m}$ and columns indexed by $J \subseteq \mathbb{N}_{n}$. Let $I \in \mathbb{R}^{n\times n}$ represent the identity matrix.  In particular, we define the sub-gradient and sub-Hessian by
\begin{align*}
&\nabla_{T} f(x) := [\nabla f(x)]_{T},  \nabla^2_{T} f(x) := [\nabla^2 f(x)]_{T,T},\\
&\nabla^2_{T,J} f(x) := [\nabla^2 f(x)]_{T,J},\ \nabla^2_{T :} f(x) := [\nabla^2 f(x)]_{T,\mathbb{N}_n} .
\end{align*}

\section{Preliminaries}
In this section, we review some preliminaries of problem \eqref{L0}. To facilitate a clearer understanding of our proposed method, we also revisit the NL0R method in \cite{zhou2021newton} for solving problem \eqref{L0}.

The definition of $\tau$-stationary point$ (\tau > 0)$ of \eqref{L0} is given as follows \cite{zhou2021newton}.
\begin{definition}
We say that $x$ is a $\tau$-stationary point $(\tau > 0)$ if the following holds:
\begin{align}\label{eq-2-1}
x &\in {\rm{Prox}}_{\tau \lambda\|\cdot \|_0}(x - \tau \nabla f(x)) ,
\end{align}
where ${\rm{Prox}}_{\tau \lambda\|\cdot \|_0}(z)$ is defined by
\begin{align}\label{eq-2-2}
[{\rm{Prox}}_{\tau \lambda\|\cdot \|_0}(z)]_i =
\begin{cases}
z_i,  & \mbox{if }|z_i| > \sqrt{2\tau \lambda}, \\
z_i\ \mbox{or}\  0, & \mbox{if }|z_i| = \sqrt{2\tau \lambda},\\
0, & \mbox{if }|z_i| < \sqrt{2\tau \lambda}.
\end{cases}
\end{align}
\end{definition}
The following proposition provides a necessary and sufficient condition for $\tau$-stationary point.
\begin{proposition} {\rm{\cite[Lemma 1]{zhou2021newton}}}
For problem \eqref{L0}, $x$ is a $\tau$-stationary point, if and only if
\begin{align}\label{eq-2-3}
\begin{cases}
\nabla_i f(x) = 0\  and\  |x_i| > \sqrt{2\tau \lambda},\  & i\in \text{supp} (x),\\
\nabla_i f(x) \le \sqrt{2\tau \lambda},\  & i\not\in \text{supp} (x).\\
\end{cases}
\end{align}
\end{proposition}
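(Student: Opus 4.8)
The plan is to unfold the inclusion \eqref{eq-2-1} componentwise and match it against the three branches of the hard-thresholding map \eqref{eq-2-2}. First I would set $z := x - \tau\nabla f(x)$, so that $\tau$-stationarity reads $x \in {\rm{Prox}}_{\tau\lambda\|\cdot\|_0}(z)$, and by \eqref{eq-2-2} this is equivalent to the $n$ scalar conditions $x_i \in [{\rm{Prox}}_{\tau\lambda\|\cdot\|_0}(z)]_i$ with $z_i = x_i - \tau\nabla_i f(x)$. The argument then proceeds index by index, splitting on whether $i$ lies in ${\rm{supp}}(x)$, and in each branch I would rewrite the threshold condition on $|z_i|$ back in terms of $x_i$ and $\nabla_i f(x)$.

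For the forward direction, take $i \in {\rm{supp}}(x)$, so $x_i \neq 0$. Since the map returns $0$ whenever $|z_i| < \sqrt{2\tau\lambda}$, the constraint $x_i \ne 0$ forces $|z_i| \ge \sqrt{2\tau\lambda}$ together with $x_i = z_i$. Substituting $z_i = x_i - \tau\nabla_i f(x)$ into $x_i = z_i$ immediately yields $\nabla_i f(x) = 0$, whence $|z_i| = |x_i|$ and the threshold inequality becomes the stated magnitude condition on $|x_i|$. For $i \notin {\rm{supp}}(x)$ we have $x_i = 0$; requiring $0$ to be an admissible output excludes the branch $|z_i| > \sqrt{2\tau\lambda}$ (which returns the nonzero $z_i$) and leaves $|z_i| \le \sqrt{2\tau\lambda}$. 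Using $z_i = -\tau\nabla_i f(x)$ and dividing by $\tau$ converts this into the gradient bound of \eqref{eq-2-3}.

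The converse direction reverses each implication: assuming \eqref{eq-2-3}, I would verify branch by branch that the prescribed $x_i$ is indeed one of the values allowed by \eqref{eq-2-2}, so that $x \in {\rm{Prox}}_{\tau\lambda\|\cdot\|_0}(z)$ holds. The main obstacle, and the only place requiring genuine care, is the boundary case $|z_i| = \sqrt{2\tau\lambda}$, where the proximal map is set-valued and may return either $z_i$ or $0$. It is precisely the use of the inclusion ``$\in$'' rather than an equality in \eqref{eq-2-1} that lets this boundary case be absorbed on both sides; keeping track of whether the threshold inequalities are strict or non-strict at this boundary is what reconciles the equality $\nabla_i f(x)=0$ with the magnitude condition on $x_i$ and makes the two directions match exactly.
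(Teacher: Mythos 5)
Your componentwise unfolding of the inclusion \eqref{eq-2-1} against the three branches of \eqref{eq-2-2} is exactly the standard argument; the paper itself offers no proof of this proposition (it is quoted from \cite[Lemma 1]{zhou2021newton}), and that reference proves it precisely this way. So the approach is the right one and there is nothing structurally different to compare.

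There is, however, one point where your write-up does not close the loop, and it is worth being precise about because it exposes a transcription error in \eqref{eq-2-3} itself. For $i\in{\rm supp}(x)$ your own derivation gives $x_i=z_i$ and $|z_i|\ge\sqrt{2\tau\lambda}$, hence $|x_i|\ge\sqrt{2\tau\lambda}$ with a \emph{non-strict} inequality: when $|z_i|=\sqrt{2\tau\lambda}$ the set-valued branch of \eqref{eq-2-2} still admits the output $z_i$, so a point with $\nabla_i f(x)=0$ and $|x_i|=\sqrt{2\tau\lambda}$ is $\tau$-stationary yet violates the strict inequality displayed in \eqref{eq-2-3}. Your closing paragraph promises that careful bookkeeping at the boundary ``makes the two directions match exactly,'' but no bookkeeping can do that; the necessity direction genuinely fails for the strict version, and the correct statement (as in the cited lemma) uses $|x_i|\ge\sqrt{2\tau\lambda}$. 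Similarly, for $i\notin{\rm supp}(x)$ your step ``dividing by $\tau$'' applied to $\tau|\nabla_i f(x)|\le\sqrt{2\tau\lambda}$ yields $|\nabla_i f(x)|\le\sqrt{2\lambda/\tau}$, not the bound $\nabla_i f(x)\le\sqrt{2\tau\lambda}$ printed in \eqref{eq-2-3}, which is also missing the absolute value. In short: your method is correct and proves the corrected statement, but you should state explicitly that \eqref{eq-2-3} as displayed needs $\ge$ in the first branch and $|\nabla_i f(x)|\le\sqrt{2\lambda/\tau}$ in the second, rather than asserting that the boundary case can be reconciled with the strict inequality.
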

Here $\text{supp}(x)$ denotes the support set of $x$, defined by $$\text{supp}(x) := \{ j \in \mathbb{N}_n\ | \ x_j \neq 0\}.$$
Similar to NL0R we also needs the strong smoothness and convexity of $f$.
\begin{definition}
$f$ is strongly smooth about constant $L>0$ if
\begin{align}\label{eq-2-4}
f(z) \le f(x) + \langle \nabla f(x),z-x\rangle + (L/2)\|z-x\|^2,\ \forall x,z\in \mathbb{R}^n ,
\end{align}
$f$ is strongly convex about constant $\mu > 0$ if
\begin{align}\label{eq-2-5}
f(z) \ge f(x) + \langle \nabla f(x),z-x\rangle + (\mu/2)\|z-x\|^2,\ \forall x,z\in \mathbb{R}^n .
\end{align}
\end{definition}
The following results reveals the relation of the minimizer of problem \eqref{L0} and its $\tau$-stationary point.
\begin{proposition} {\rm{\cite[Theorem 1]{zhou2021newton}}} For problem \eqref{L0}, the following results hold.
\begin{enumerate}
    \item[(1)] \textbf{(Necessity)} A global minimizer $x^*$ is also a $\tau$-stationary point for any $0 < \tau < \frac{1}{L}$ if $f$ is strongly smooth with $L > 0$. Moreover,
    \begin{align*}
    x^* = {\rm{Prox}}_{\tau \lambda} \left( x^* - \tau \nabla f(x^*) \right).
    \end{align*}
    \item[(2)] \textbf{(Sufficiency)} A $\tau$-stationary point with $\tau > 0$ is a local minimizer if $f$ is convex. Furthermore, a $\tau$-stationary point with $\tau \geq 1/\ell$ is also a (unique) global minimizer if $f$ is strongly convex with $\ell > 0$.
\end{enumerate}
\end{proposition}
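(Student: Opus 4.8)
The plan is to treat the two assertions separately. Each reduces to comparing the objective $F(x) := f(x) + \lambda\|x\|_0$ at the candidate $x^*$ against an arbitrary competitor $u$, and then exploiting the quadratic control supplied by strong smoothness \eqref{eq-2-4} in the necessity part and by (strong) convexity \eqref{eq-2-5} together with the stationarity characterization \eqref{eq-2-3} in the sufficiency part.

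For \textbf{(Necessity)} I would use the variational description of the proximal map: with $z := x^* - \tau\nabla f(x^*)$, the set $\mathrm{Prox}_{\tau\lambda\|\cdot\|_0}(z)$ is exactly the set of minimizers of $g(u) := \tfrac{1}{2\tau}\|u-z\|^2 + \lambda\|u\|_0$. Expanding the square and discarding the $u$-independent constant gives $g(u)-g(x^*) = \tfrac{1}{2\tau}\|u-x^*\|^2 + \langle \nabla f(x^*), u-x^*\rangle + \lambda(\|u\|_0-\|x^*\|_0)$. Rearranging the strong smoothness inequality \eqref{eq-2-4} as $\langle \nabla f(x^*), u-x^*\rangle \ge f(u)-f(x^*) - \tfrac{L}{2}\|u-x^*\|^2$, I obtain $g(u)-g(x^*) \ge \left(\tfrac{1}{2\tau}-\tfrac{L}{2}\right)\|u-x^*\|^2 + \left(F(u)-F(x^*)\right)$. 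Global minimality of $x^*$ makes the last bracket nonnegative, and $0<\tau<1/L$ makes the coefficient of $\|u-x^*\|^2$ strictly positive; hence $g(u)\ge g(x^*)$ for all $u$ with strict inequality whenever $u\neq x^*$. Thus $x^*$ is the unique minimizer of $g$, which yields both the membership \eqref{eq-2-1} and the sharper equality $x^*=\mathrm{Prox}_{\tau\lambda}(x^*-\tau\nabla f(x^*))$.

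For \textbf{(Sufficiency)} I set $T:=\mathrm{supp}(x^*)$ and read off from \eqref{eq-2-3} that $\nabla_i f(x^*)=0$ and $|x^*_i|>\sqrt{2\tau\lambda}$ for $i\in T$, while $|\nabla_i f(x^*)|\le\sqrt{2\lambda/\tau}$ for $i\notin T$. For the \emph{local} claim I use only convexity: $F(u)-F(x^*)\ge \langle\nabla f(x^*),u-x^*\rangle + \lambda(\|u\|_0-\|x^*\|_0)$, and since the gradient vanishes on $T$ and $x^*$ vanishes off $T$, the linear term collapses to $\sum_{i\notin T}\nabla_i f(x^*)u_i$. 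Restricting to a ball of radius $\delta\le\min\{\min_{i\in T}|x^*_i|,\ \sqrt{\lambda\tau/2}\}$ keeps every coordinate on $T$ nonzero, so $\|u\|_0-\|x^*\|_0$ equals the number of newly activated indices $i\notin T$, each contributing $+\lambda$ that dominates $|\nabla_i f(x^*)|\,|u_i|\le\sqrt{2\lambda/\tau}\,\delta\le\lambda$; summing gives $F(u)\ge F(x^*)$ locally. For the \emph{global} claim under strong convexity I upgrade to \eqref{eq-2-5}, gaining the term $\tfrac{\ell}{2}\|u-x^*\|^2$, and partition the coordinates into $A=\{i\in T:u_i\neq 0\}$, $B=\{i\in T:u_i=0\}$, $C=\{i\notin T:u_i\neq 0\}$, so that $\|u\|_0-\|x^*\|_0=|C|-|B|$. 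Completing the square on each $i\in C$ shows $\nabla_i f(x^*)u_i+\tfrac{\ell}{2}u_i^2+\lambda\ge\lambda-\tfrac{(\nabla_i f(x^*))^2}{2\ell}\ge\lambda\bigl(1-\tfrac{1}{\tau\ell}\bigr)\ge 0$ once $\tau\ell\ge 1$, while for each $i\in B$ the strict support bound gives $\tfrac{\ell}{2}(x^*_i)^2>\ell\tau\lambda\ge\lambda$, which absorbs the penalty $-\lambda$ lost by deactivating that coordinate. Adding up the nonnegative contributions yields $F(u)\ge F(x^*)$, with strictness for $u\neq x^*$ delivering uniqueness.

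The routine piece is \textbf{(Necessity)}; the delicate work is the global statement in \textbf{(Sufficiency)}, where the combinatorial $\ell_0$ term has to be tracked exactly rather than bounded crudely. The crux is the bookkeeping encoded by the sets $A,B,C$ and the simultaneous verification of two budget inequalities: the per-coordinate quadratic over $C$ stays nonnegative precisely because $\tau\ell\ge 1$ turns the gradient bound $\sqrt{2\lambda/\tau}$ into $\tfrac{(\nabla_i f(x^*))^2}{2\ell}\le\lambda$, and the strict support bound $|x^*_i|^2>2\tau\lambda$ is exactly strong enough for the $\tfrac{\ell}{2}(x^*_i)^2$ term to pay for each coordinate moved into $B$. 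The only genuinely subtle point is strictness for the uniqueness assertion in the boundary regime $\tau=1/\ell$ with the off-support gradient bound attained; this is controlled by the strict inequality $|x^*_i|>\sqrt{2\tau\lambda}$ on the support.
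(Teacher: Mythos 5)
The paper itself offers no proof of this proposition --- it is imported verbatim from \cite[Theorem 1]{zhou2021newton} --- so there is no in-paper argument to compare against; judged on its own, your proof follows what is essentially the standard (and, as far as I can tell, the cited source's) route, and the substantive claims all go through. For necessity, the variational characterization of $\mathrm{Prox}_{\tau\lambda\|\cdot\|_0}$ combined with strong smoothness correctly yields $g(u)-g(x^*)\ge\bigl(\tfrac{1}{2\tau}-\tfrac{L}{2}\bigr)\|u-x^*\|^2$, hence the singleton equality. For sufficiency, the partition into $A,B,C$ and the two budget inequalities are exactly the right bookkeeping. One remark: you are (correctly) using the off-support condition $|\nabla_i f(x^*)|\le\sqrt{2\lambda/\tau}$, whereas \eqref{eq-2-3} as printed reads $\nabla_i f(x)\le\sqrt{2\tau\lambda}$; the printed form is a typo, and your reading is the one consistent with \eqref{eq-2-2}, so your proof does not actually match the (mis)stated condition but the intended one.

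The one genuine weak spot is the uniqueness assertion in the boundary regime $\tau=1/\ell$. Your per-coordinate bound on $C$ degenerates to $\lambda\bigl(1-\tfrac{1}{\tau\ell}\bigr)=0$ there, and the fix you propose --- invoking $|x_i^*|>\sqrt{2\tau\lambda}$ on the support --- only rescues strictness for coordinates in $B$, not in $C$. In fact no fix exists: take $n=1$, $f(x)=\tfrac{\ell}{2}(x-a)^2$ with $a=\sqrt{2\lambda/\ell}$ and $\tau=1/\ell$; then $x^*=0$ is a $\tau$-stationary point and a global minimizer, yet $F(0)=F(a)=\lambda$, so the global minimizer is not unique. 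Your argument does establish global minimality for all $\tau\ge 1/\ell$, and uniqueness whenever $\tau>1/\ell$ or the off-support gradient bounds are strict; the parenthetical ``(unique)'' in the statement should be read with that caveat rather than as something your $C$-coordinate estimate can deliver at the boundary.
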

To express the solution of \eqref{eq-2-1} more explicitly, we define
\begin{align}\label{eq-2-6}
T:=T_{\tau}(x;\lambda):=\{ i\in \mathbb{N}_n \mid |x_i - \tau \nabla_{i} f(x)| \ge \sqrt{2\tau \lambda} \} .
\end{align}
By \cite[Theorem 2]{zhou2021newton}, the following holds
\begin{align*}
 x = {\rm{Prox}}_{\tau \lambda\|\cdot \|_0}(z) \Rightarrow
 F_{\tau}(x;T) = 0 \Rightarrow x \in {\rm{Prox}}_{\tau \lambda\|\cdot \|_0}(z).
\end{align*}
where $F_{\tau}(x;T) $ is defined by
\begin{align}\label{eq-2-7}
F_{\tau}(x;T) :=
 \begin{bmatrix}
 \nabla_{T}f(x) \\
 x_{\overline{T}}
 \end{bmatrix}
 = 0.
\end{align}
The idea of NL0R to solve \eqref{L0} is to apply Newton's method to solve system \eqref{eq-2-7}. In each iteration $k$, we define
\begin{align*}
g^k := \nabla f(x^k),\ T_k := \{ i\in \mathbb{N}_n \ | \ |x^k_i - g^k_i | \ge \sqrt{2\tau \lambda} \}.
\end{align*}
The direction vector updated by Newton's method in NL0R is taken as follows
\begin{align}\label{direction2}
 JF_{\tau}(x^k;T_k)d^k = - F_{\tau}(x^k;T_k),
\end{align}
where $JF_{\tau}(x^k;T_k)$ is the Jacobian of $F_{\tau}(x^k;T_k)$ at $x^k$. Note that, \eqref{direction2} is taken as the following equivalent form
\begin{align}\label{eq-zhounewtondk}
\nabla_{T_k}^{2}f(x^k) d_{T_k}^k       &= \nabla_{T_k,\overline{T}_k}^{2}f(x^k) x^k_{\overline{T}_k} - \nabla_{T_k} f(x^k) \nonumber\\
d^k_{\overline{T}_k} &= -x^k_{\overline{T}_k}
\end{align}
If $d^k$  fails to satisfy some descent conditions, NL0R adopts the gradient descent method to update the iteration direction. To achieve global convergence, a modified Armijo-type line search is used for $d^k$.

Note that $\overline{T}_k$ is in general large and the calculation of $\nabla^2_{T_k, \overline{T}_k} f(x^k)$ may takes extensive computational cost. Therefore, a natural question is whether we can avoid computing $\nabla^2_{T_k, \overline{T}_k} f(x^k)$ while keeping the good convergence property. This motivates the work in this paper, which is detailed in Section 3.

\section{Modified Block Newton Method}
In this part, we propose the so-called modified block Newton's method (MBNL0R) for solving problem \eqref{L0}. The idea is as follows.

Note that the Jacobian of $F_{\tau}(x; T)$ is
\begin{align*}
\begin{bmatrix}
\nabla_{T_k}^{2}f(x^k)        & \nabla_{T_k,\overline{T}_k}^{2}f(x^k)\\
0_{\overline{T}_k,T_k} & I_{\overline{T}_k,\overline{T}_k}
\end{bmatrix}
\end{align*}
The eigenvalues of the block matrix on the main diagonal is the eigenvalue of the entire matrix, so we considered to ignore the block matrix in the upper right corner to reduce computational complexity. To overcome the singularity of $\nabla_{T_k}^{2}f(x^k)$, we introduced a regularization term $\mu_k >0$.  The approximate Jacobian matrix is
\begin{align*}
\begin{bmatrix}
\nabla_{T_k}^{2}f(x^k) + \mu_k I_{T_k,T_k}        & 0_{T_k,\overline{T}_k}\\
0_{\overline{T}_k,T_k} & I_{\overline{T}_k,\overline{T}_k}
\end{bmatrix}.
\end{align*}
For brevity, we omit the subscript of the identity matrix $I$. Then the update of direction $d^k$ is taken as follows
\begin{align}\label{direction}
\begin{cases}
&(\nabla_{T_k}^{2}f(x^k) + \mu_k I)d_{T_k}^k = - \nabla_{T_k}f(x^k) , \\
&d_{\overline{T}_k}^k = -x_{\overline{T}_k}^k.
\end{cases}
\end{align}
This means that the support set of $x^{k+1}$ will be located within $T_k$. Namely,
\begin{align}\label{support}
{\rm{supp}} (x^{k+1}) \subset T_k.
\end{align}
The modified Armijo-type line search is taken as follows
\begin{align}\label{2.4}
x^k(\alpha):= \begin{bmatrix}
x^k_{T_k} + \alpha d^k_{T_k}\\
x^k_{\overline{T}_k} + d^k_{\overline{T}_k}
\end{bmatrix}
=\begin{bmatrix}
x^k_{T_k} + \alpha d^k_{T_k}\\
0
\end{bmatrix}.
\end{align}
Interestingly, we observe that the NL0R method eventually fixes the index set $T_k$ within a neighborhood of the solution while maintaining $x_{\overline{T}_k}^k=0$. Thus, when $k$ is sufficiently large, direction updating \eqref{eq-zhounewtondk} and \eqref{direction} become equivalent if we omit the additional regularization term $\mu_k I$ in \eqref{direction}. This observation validates the soundness of our proposed approach. Furthermore, with an appropriate choice of $\mu_k \to 0$, we can also establish the local super-linear or quadratic convergence rate. These aspects will be elaborated in detail in subsequent sections.

Similar to NL0R, to avoid $0$ becoming a $\tau$-stationary point, we also denote the following bound for parameter $\lambda$.
\begin{align*}
\underline{\lambda} := \min\limits_{i} \left\{ \frac{\tau}{2}|\nabla_i f(0)|^2\ :\ \nabla_i f(0) \neq 0  \right\}, \overline{\lambda} := \max\limits_{i} \frac{\tau}{2}|\nabla_i f(0)|^2 .
\end{align*}
The framework of the proposed method is given as following.
\begin{algorithm}[H]
\caption{Modified Block Newton Algorithm for $\ell_0$-Regularized Optimization (MBNL0R)}\label{algorithm-NL0R}
\begin{algorithmic}[1]
\Statex
\textbf{S0.} Initial setting:  $\tau > 0, \delta > 0, \lambda \in (0,\underline{\lambda}), \sigma \in (0,1/2), \beta \in (0,1), x^0, T_{-1} = \emptyset, k = 0, \mu_0 > 0, \epsilon > 0$, $D > 0$.

\Statex
\textbf{S1.} If $\|F_{\tau}(x^k; T_k) \| < \epsilon$, stop. Otherwise if $ S_{k+1} \neq \emptyset$ then $T_{k+1} = \tilde{T}_{k}$, otherwise $ T_{k+1} = T_{k-1}$,
$$ \tilde{T}_{k} = \{ i\in \mathbb{N}_{n} : |x_{i}^{k} - \tau g_{i}^{k}| {\ge} \sqrt{2\tau \lambda} \} .$$

\Statex
\textbf{S2.} Set $\mu_k = \min (\| F_{\tau}(x^k, T_k) \|^2, D)$ and solve \eqref{direction}. If $d^{k}$ satisfies
\begin{align}\label{alg-2-1}
 \langle g^k_{T_{k}}, d^k_{T_{k}} \rangle \le  -\delta \|d^k \|^2 + (1/4\tau)\| x^{k}_{\overline{T}_{k}} \|^2 - \mu_k \|d^k_{T_k}\|^2,
\end{align}
go to \textbf{S3}. Otherwise,
\begin{align}\label{alg-2-3}
d^k_{T_{k}} = -g^k_{T_{k}}, d^{k}_{\overline{T}_{k}} = -x^{k}_{\overline{T}_{k}}.
\end{align}

\Statex
\textbf{S3.} Find the smallest non-negative integer $m_k$ such that
$$ f(x^{k}(\beta^{m_k})) \le f(x^k) + \sigma \beta^{m_k}\langle g^k, d^k \rangle. $$

\Statex
\textbf{S4.} Set $\alpha_k = \beta^{m_k}, x^{k+1} = x^{k}(\alpha_k) = x^k + \alpha_k d^k$, $k = k+1$. Go to \textbf{S1}.
\end{algorithmic}
\end{algorithm}

\section{Global convergence}
Next, we analyze the global convergence of the algorithm.
We define the following parameters
\begin{align}\label{2-1-1}
&\overline{\alpha} := \min \left\{\frac{1-2\sigma}{L/\delta-\sigma}, \frac{2(1-\sigma)\delta}{L},1 \right\} ,\nonumber \\
&\overline{\tau} := \min \left\{\frac{2\overline{\alpha}\delta \beta}{nL^2}, \frac{\overline{\alpha} \beta}{n},\frac{1}{4L}, \frac{\nu}{n(2L+D)} \right\} \nonumber ,\\
&\rho := \min \left\{\frac{2\delta - n\tau L^2}{2}, \frac{2-n\tau}{2} \right\},
\end{align}
where, $\sigma$, $\delta$, $\beta$, $D$ are defined in Algorithm \ref{algorithm-NL0R}, $0< \nu \le \frac{\overline{\alpha} \beta(1-\sigma)}{1-\overline{\alpha} \beta\sigma}$. For convenience, let
\begin{center}
$J_k := T_{k-1}\backslash T_{k}$, $S_{k}:= \tilde{T}_{k}\backslash T_{k-1}$.
\end{center}
The direction $d^k$ can express as
\begin{align}\label{2-1}
\begin{cases}
-d_{\overline{T}_k}^k = x_{\overline{T}_k}^k =
\begin{bmatrix}
x_{T_{k-1}\cap \overline{T}_{k}}^k\\
0
\end{bmatrix}
=
\begin{bmatrix}
x_{T_{k-1}\backslash T_{k}}^k\\
0
\end{bmatrix}
=
\begin{bmatrix}
x_{J_k}^k\\
0
\end{bmatrix} ,\\
Q_{k}d_{T_k}^{k} = -g_{T_k}^{k}.
\end{cases}
\end{align}
where $Q_k = \nabla_{T_k}^{2}f(x^k) + \mu_k I$.
\begin{lemma}\label{lem-2-1}
{\rm{(Descent property)}} Let $f$ be strongly smooth with $L>0$, $\overline{\tau},\rho$ be defined by \eqref{2-1-1}. Then, for any $\tau  \in(0,\overline{\tau})$, we have $\rho >0$ and
\begin{align}\label{lem-2-1-0}
\langle g^{k},d^{k}\rangle \le -\rho\|d^{k}\|^{2} - \frac{\tau}{2}\|g_{T_{k-1}}^k \|^{2}.
\end{align}
\end{lemma}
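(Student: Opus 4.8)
The plan is to establish the two assertions in turn, the second by splitting the inner product over $T_k$ and $\overline{T}_k$ and treating the two branches of Step S2 in parallel. For $\rho>0$ I would use $\overline{\alpha}\le 1$ and $\beta\in(0,1)$ in \eqref{2-1-1}: then $\tau<\overline{\tau}\le\frac{2\overline{\alpha}\delta\beta}{nL^2}\le\frac{2\delta}{nL^2}$ forces $2\delta-n\tau L^2>0$, while $\tau<\overline{\tau}\le\frac{\overline{\alpha}\beta}{n}\le\frac1n<\frac2n$ forces $2-n\tau>0$, so both entries of the minimum defining $\rho$ are strictly positive.

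For \eqref{lem-2-1-0}, since ${\rm supp}(x^k)\subset T_{k-1}$ by \eqref{support}, the vector $x^k_{\overline{T}_k}$ is supported on $J_k=T_{k-1}\setminus T_k$, and the first line of \eqref{2-1} gives
\[
\langle g^k,d^k\rangle=\langle g^k_{T_k},d^k_{T_k}\rangle+\langle g^k_{\overline{T}_k},d^k_{\overline{T}_k}\rangle=\langle g^k_{T_k},d^k_{T_k}\rangle-\langle g^k_{J_k},x^k_{J_k}\rangle .
\]
I would bound the $T_k$-block in each branch of S2: in the Newton branch the acceptance test \eqref{alg-2-1} controls $\langle g^k_{T_k},d^k_{T_k}\rangle$ directly, and in the gradient branch \eqref{alg-2-3} gives the identity $\langle g^k_{T_k},d^k_{T_k}\rangle=-\|d^k_{T_k}\|^2$, from which an estimate of the same shape follows. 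In both branches, strong smoothness (bounding $\|\nabla^2_{T_k}f(x^k)\|\le L$, hence $\|Q_k\|\le L+\mu_k\le L+D$) together with $Q_kd^k_{T_k}=-g^k_{T_k}$ yields the key link $\|g^k_{T_k}\|\le(L+\mu_k)\|d^k_{T_k}\|\le(L+D)\|d^k_{T_k}\|$, which in particular controls $\|g^k_{T_{k-1}\cap T_k}\|\le\|g^k_{T_k}\|$.

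The decisive step is the off-support part. Writing $\|g^k_{T_{k-1}}\|^2=\|g^k_{T_{k-1}\cap T_k}\|^2+\|g^k_{J_k}\|^2$, I would estimate the cross term $-\langle g^k_{J_k},x^k_{J_k}\rangle$ by Young's inequality and use the thresholding characterization of the dropped indices $i\in J_k$ (together with ${\rm supp}(x^k)\subset T_{k-1}$ and strong smoothness) to dominate $\|g^k_{J_k}\|$ and $\|x^k_{J_k}\|$ by $\|d^k\|$. Collecting all contributions, the surpluses $\delta-\rho\ge\frac{n\tau L^2}{2}$ and $1-\rho\ge\frac{n\tau}{2}$ afforded by \eqref{2-1-1} are precisely what absorbs the resulting $O(\tau)$ error terms, whose constants involve $L$ and $\mu_k\le D$ and hence the factor $2L+D$; this is why $\overline{\tau}$ is taken with the denominators $nL^2$ and $n(2L+D)$. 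Reassembling then yields \eqref{lem-2-1-0}.

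I expect the main obstacle to be this off-support estimate: producing the negative term $-\frac{\tau}{2}\|g^k_{T_{k-1}}\|^2$ forces one to dominate the positive cross term and the $J_k$-portion of the gradient, which the block equation $Q_kd^k_{T_k}=-g^k_{T_k}$ alone cannot do and which instead requires the index-set bookkeeping for $J_k=T_{k-1}\setminus T_k$ and a constant-tight use of $\overline{\tau}$ and $\rho$. A secondary technical point is checking that the gradient branch \eqref{alg-2-3} obeys an \eqref{alg-2-1}-type inequality (possibly after adjusting constants), so that a single chain of estimates covers both branches.
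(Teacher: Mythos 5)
Your overall architecture matches the paper's proof: the argument for $\rho>0$ is exactly the paper's, and the scaffolding (splitting $\langle g^k,d^k\rangle$ over $T_k$ and $J_k=T_{k-1}\setminus T_k$, the link $\|g^k_{T_k}\|\le(L+\mu_k)\|d^k_{T_k}\|$ from \eqref{direction}, the decomposition $\|g^k_{T_{k-1}}\|^2=\|g^k_{T_{k-1}\cap T_k}\|^2+\|g^k_{J_k}\|^2$, and the two-branch treatment of S2) is all correct and is what the paper does. But the decisive off-support estimate is left as a sketch, and the mechanism you propose for it would not close the argument. The acceptance test \eqref{alg-2-1} carries the \emph{positive} slack $+\tfrac{1}{4\tau}\|x^k_{\overline{T}_k}\|^2$, and since $\tau\in(0,\overline{\tau})$ is small, this term cannot be absorbed into $-\rho\|d^k\|^2$ merely by noting $\|x^k_{\overline{T}_k}\|=\|d^k_{\overline{T}_k}\|\le\|d^k\|$: the coefficient $1/(4\tau)$ blows up precisely as $\tau\to0$. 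Likewise, Young's inequality applied to $-\langle g^k_{J_k},x^k_{J_k}\rangle$ only produces further \emph{positive} multiples of $\|x^k_{J_k}\|^2$ and $\|g^k_{J_k}\|^2$. What is actually needed from the off-support part is a genuinely \emph{negative} term $-\tfrac1\tau\|x^k_{\overline{T}_k}\|^2$ that cancels the slack in \eqref{alg-2-1} with room to spare, together with the negative term $-\tau\|g^k_{T_{k-1}}\|^2$ appearing in \eqref{lem-2-1-0}; dominating $\|g^k_{J_k}\|$ and $\|x^k_{J_k}\|$ by $\|d^k\|$ cannot deliver either.

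The paper extracts both negative terms in one stroke from the two-sided threshold comparison \eqref{lem-2-1-3}: for retained new indices $i\in S_k$ one has $x^k_i=0$ and hence $\tau^2|g^k_i|^2\ge 2\tau\lambda$, while for dropped indices $j\in J_k$ one has $|x^k_j-\tau g^k_j|^2<2\tau\lambda$; summing and using $|J_k|/|S_k|\le n$ gives $\|x^k_{J_k}-\tau g^k_{J_k}\|^2 < (|J_k|/|S_k|)\,\tau^2\|g^k_{S_k}\|^2 \le n\tau^2\bigl(\|g^k_{T_k}\|^2-\|g^k_{T_k\cap T_{k-1}}\|^2\bigr)$. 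Expanding the left-hand square then yields \eqref{lem-2-1-4}, i.e.\ $-2\langle x^k_{J_k},g^k_{J_k}\rangle\le n\tau\|g^k_{T_k}\|^2-\tau\|g^k_{T_{k-1}}\|^2-\tfrac1\tau\|x^k_{\overline{T}_k}\|^2$, after which $n\tau\|g^k_{T_k}\|^2\le n\tau(L+\mu_k)^2\|d^k_{T_k}\|^2$ is absorbed using the definitions of $\overline{\tau}$ and $\rho$ in \eqref{2-1-1} (the $\mu_k$-dependent remainder is handled by the bound $\mu_k\le D$ and the choice of $\nu$). Note also that this ratio argument requires $S_k\neq\emptyset$, so the case $S_k=\emptyset$ must be treated separately; there $T_k=T_{k-1}$, $J_k=\emptyset$, the off-support part vanishes, and the term $-\tau\|g^k_{T_{k-1}}\|^2$ is instead manufactured from the elementary inequality $0\le n\tau\|g^k_{T_k}\|^2-\tau\|g^k_{T_k}\|^2$ (valid since $n\ge1$) before invoking $\|g^k_{T_k}\|\le(L+\mu_k)\|d^k_{T_k}\|$. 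Your proposal correctly identifies where the difficulty sits, but without this specific threshold-square computation the proof does not go through.
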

\begin{proof}
It follows from \eqref{2-1-1} that $\overline{\alpha} \le 1$ and thus $\overline{\alpha} \beta < 1$ due to $\beta \in (0,1)$ . Hence $\overline{\tau} \le \min \{2\delta/(nL^2),2/n\}$, immediately showing that $\rho > 0$ if $\tau \in (0,\overline{\tau})$ . In
 addition, if $d^k$ is updated by \eqref{direction}, then
\begin{align}\label{lem-2-1-1}
\|g_{T_k}^k\| = \|(\nabla^2_{T_k} f(x^k) + \mu_k I) d_{T_k}^k\| \le (L+\mu_k)\|d_{T_k}^k\| ,
\end{align}
where the inequality is due to that $ f $ is strongly smooth with respect to the constant $ L $ and $ \|\nabla^2_{T_k} f(x^k)\|_2 \le \|\nabla^2 f(x^k)\|_2 \le L $. We will discuss two cases.

\textbf{Case 1:} $ S_k = \emptyset $. From Algorithm \ref{algorithm-NL0R}, we know that $ T_k = T_{k-1} $, thus $ J_k = T_{k-1} \setminus T_K = \emptyset $, and $ d_{\overline{T}_k}^k = -x_{\overline{T}_k}^k = 0 $. Since $ \mu_k \le D $, we obtain that
\begin{align}\label{lem-2-1-6}
2\mu_k - 2n\tau L\mu_k - n\tau \mu_k^2 &\overset{\eqref{2-1-1}}{\ge} 2\mu_k - \frac{2L\nu \mu_k }{2L+D} - \frac{\nu \mu_k^2}{2L+D} \nonumber\\
&\ \ge \ 2\mu_k - \frac{2L\mu_k }{2L+D} - \frac{ \mu_k^2}{2L+D} \nonumber\\
&\ = \ \frac{1}{2L+D}[2\mu_k(2L+D) - 2L\mu_k - \mu_k^2] \nonumber\\
&\ \ge \ \mu_k \ge 0.
\end{align}
Since $ \|d^k_{T_{k}}\|^2 \le \|d^k\|^2 $, and $ d^k $ is obtained from \eqref{direction}, we obtain that
\begin{align}\label{lem-2-1-2}
2\langle g^k, d^k \rangle &\ = \ 2\langle g_{T_k}^k,d_{T_k}^k \rangle - 2 \langle g_{\overline{T}_k}^k,x_{\overline{T}_k}^k \rangle \nonumber \\
&\overset{\mathrm{\eqref{alg-2-1}}}{\le} -2\delta \|d^k\|^2 + \|x^k_{\overline{T}_{k}}\|^2/(2\tau) - 2\mu_k \|d^k_{T_k}\|^2 \nonumber \\
&\ \le \ -2\delta \|d^k\|^2  + n\tau\|g_{T_k}^k\|^2 - \tau\|g_{T_k}^k\|^2 - 2\mu_k \|d^k_{T_k}\|^2\nonumber \\
&\overset{\mathrm{\eqref{lem-2-1-1}}}{\le} -2\delta \|d^k\|^2 -[2\mu_k - n\tau (L+\mu_k)^2]\|d^k_{T_{k}}\|^2 - \tau\|g_{T_k}^2\|^2 \nonumber \\
&\ \le \  -(2\delta - n\tau L^2) \|d^k\|^2 -(2\mu_k - 2n\tau L\mu_k - n\tau \mu_k^2)\|d^k_{T_{k}}\|^2 - \tau\|g_{T_{k-1}}^k\|^2  \nonumber\\
&\overset{\eqref{lem-2-1-6}}{\le} -(2\delta - n\tau L^2) \|d^k\|^2 -\mu_k \|d^k_{T_{k}}\|^2 - \tau\|g_{T_{k-1}}^k\|^2  \nonumber\\
&\overset{\mathrm{\eqref{2-1-1}}}{\le} -2\rho\|d^{k}\|^{2}  - \tau\|g_{T_{k-1}}^k\|^{2} .
\end{align}
Therefore, \eqref{lem-2-1-0} holds.

\textbf{Case 2:} $S_k \neq \emptyset$. For any $i \in S_k = \tilde{T}_k \setminus T_{k-1} = T_k \setminus T_{k-1}$, since $\text{supp}(x^k) \subset T_{k-1}$, we have $x_i^k = 0$. Since $T_k = \tilde{T}_k$, it holds that for any $i \in S_k$,
\begin{align}\label{lem-2-1-3}
|\tau g_i^k|^2 = |x_i^k - \tau g_i^k|^2 \ge 2\tau \lambda > |x_j^k - \tau g_j^k|^2, \ \forall j \in J_k.
\end{align}
Consequently, it holds that
\begin{align*}
&\quad \quad   (|J_k|/|S_k|)\tau^2 \left[ \|g_{T_k}^k\|^2 - \|g_{T_k \cap T_{k-1}}^k\|^2 \right] \\
&\ =\ (|J_k|/|S_k|)\tau^2 \|g_{S_k}^k\|^2 \\
&\overset{\mathrm{\eqref{lem-2-1-3}}}{\ge} |J_k|2\tau \lambda \overset{\mathrm{\eqref{lem-2-1-3}}}{>} \|x_{J_k}^k - \tau g_{J_k}^k\|^2 \\
&\ = \ \|x_{J_k}^k\|^2 - 2\tau \langle x_{J_k}^k, g_{J_k}^k \rangle + \tau^2 \|g_{J_k}^k\|^2 \\
&\overset{\eqref{2-1}}{=} \|x_{\overline{T}_k}^k \|^2 - 2\tau \langle x_{J_k}^k, g_{J_k}^k \rangle + \tau^2 \|g_{J_k}^k\|^2 \\
&\ =\ \|x_{\overline{T}_k}^k \|^2 - 2\tau \langle x_{J_k}^k, g_{J_k}^k \rangle + \tau^2\left[ \|g_{T_{k-1}}^k\|^2 - \|g_{T_k \cap T_{k-1}}^k\|^2 \right] .
\end{align*}
Since ${|J_k|}/{|S_k|} \le n$, we obtain that
\begin{align}\label{lem-2-1-4}
-2\langle x_{J_k}^k, g_{J_k}^k \rangle &\le n\tau \|g_{T_k}^k\|^2 - \tau \|g_{T_{k-1}}^k\|^2 - \frac{1}{\tau}\|x_{\overline{T}_k}^k \|^2 \nonumber\\
&\overset{\eqref{lem-2-1-1}}{\le} n\tau (L + \mu_k)^2 \|d^k_{T_k}\|^2 - \tau\|g_{T_{k-1}}^k\|^2 - \frac{1}{\tau}\|x_{\overline{T}_k}^k \|^2 .
\end{align}
If $d^k$ is obtained from \eqref{direction}, then
\begin{align}\label{lem-2-1-5}
2\langle g^k_{T_k}, d^k_{T_k} \rangle \overset{\eqref{alg-2-1}}{\le} -2\delta \|d^k\|^2 + \frac{1}{2\tau}\|x^k_{\overline{T}_{k}}\|^2 - 2\mu_k \|d^{k}_{T_{k}}\|^2 .
\end{align}
By calculations, it holds that
\begin{align*}
2\langle g^k, d^k \rangle &\quad \ \, =  \quad\ \,  2\langle g^k_{T_{k}}, d^k_{T_{k}} \rangle - 2\langle g^k_{\overline{T}_k}, d^k_{\overline{T}_k} \rangle \\
&\quad \overset{\eqref{2-1}}{=} \quad 2\langle g^k_{T_{k}}, d^k_{T_{k}} \rangle - 2\langle g^k_{J_k}, d^k_{J_k} \rangle \\
&\overset{\eqref{lem-2-1-4},\eqref{lem-2-1-5}}{\le} -2\delta \|d^k\|^2 - (2\mu_k - n\tau (L+\mu_k)^2) \|d^k_{T_{k}}\|^2 - \tau\|g_{T_{k-1}}^k\|^2 - \|x_{\overline{T}_k}^k \|^2/(2\tau) \\
&\quad \  \le \quad\,\; -(2\delta - n\tau L^2) \|d^k\|^2 -(2\mu_k - 2n\tau L\mu_k - n\tau \mu_k^2)\|d^k_{T_{k}}\|^2 - \tau\|g_{T_{k-1}}^k\|^2  \\
&\quad \overset{\eqref{lem-2-1-6}}{\le} \ \,\; -(2\delta - n\tau L^2) \|d^k\|^2 -\mu_k \|d^k_{T_{k}}\|^2 - \tau\|g_{T_{k-1}}^k\|^2  \\
&\quad \overset{\eqref{2-1-1}}{\le} \ \ -2\rho\|d^{k}\|^{2} - \tau\|g_{T_{k-1}}\|^{2}.
\end{align*}
If $d_{T_k}^k = - g_{T_k}^k$, it holds that
\begin{align*}
2\langle g^k, d^k \rangle &\ = \quad\,  2\langle g^k_{T_{k}}, d^k_{T_{k}} \rangle - 2\langle g^k_{\overline{T}_k}, d^k_{\overline{T}_k} \rangle \\
&\ = \quad  -2\|d_{T_k}^k \|^2 - 2\langle g^k_{J_k}, d^k_{J_k} \rangle \\
&\overset{\eqref{lem-2-1-4}}{\le} \,\; -2\|d_{T_k}^k \|^2 +n\tau \|g_{T_k}^k\|^2 - \tau \|g_{T_{k-1}}^k\|^2 - \|x_{\overline{T}_k}^k \|^2/\tau\\
&\overset{\eqref{2-1}}{=} \,\; -(2-n\tau )\|d_{T_k}^k \|^2 - \|d_{\overline{T}_k}^k \|^2/\tau -  \tau \|g_{T_{k-1}}^k\|^2 \\
&\ \le \quad  -(2-n\tau )(\|d_{T_k}^k \|^2 + \|d_{\overline{T}_k}^k \|^2) -  \tau \|g_{T_{k-1}}^k\|^2 \\
&\overset{\eqref{2-1-1}}{\le} \,\;  -2\rho \|d^k\|^2 - \tau \|g_{T_{k-1}}^k\|^2,
\end{align*}
where the second inequality is from $-1/\tau \le \tau - 2 \le n\tau -2$ for any $\tau > 0$. Thus, \eqref{lem-2-1-0} holds, the proof is complete.
\end{proof}
Our next result shows that $\alpha_k$ exists and is bound away from zero.
\begin{lemma}\label{lem-2-2}
{\rm{(Existence and boundedness of $\alpha_k$)}} Let $f$ be strongly smooth with respect to the constant $L > 0$, and let $\overline{\tau}, \overline{\alpha}$ be defined as above. Then, for any $k \ge 0$ and any parameters
$0 < \alpha \le \overline{\alpha}$,  $0 < \delta \le \min \{1, 2L\}$,  $0 < \tau \le \min \left\{\frac{\alpha \delta}{nL^2}, \frac{\alpha}{n}, \frac{1}{4L}, \frac{\nu}{n(2L+D)}\right\}$,  $0 < \nu \le \frac{\alpha(1-\sigma)}{1-\alpha \sigma},
$
the following inequality holds:
\begin{align}\label{lem-2-2-1}
f(x^k(\alpha))  \le f(x^k) + \sigma \alpha \langle g^k, d^k \rangle.
\end{align}
Moreover, for any $\tau \in (0, \overline{\tau})$, we have $\liminf\limits_{k \to \infty} \{\alpha_k\} \ge \beta \overline{\alpha} > 0$.
\end{lemma}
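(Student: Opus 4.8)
The plan is to verify the sufficient-decrease inequality \eqref{lem-2-2-1} directly from strong smoothness and then extract the step-size lower bound by a backtracking argument. First I would record the displacement produced by one line-search step: by \eqref{2.4} the move from $x^k$ to $x^k(\alpha)$ is $\alpha d^k_{T_k}$ on $T_k$ but the \emph{full} step $d^k_{\overline{T}_k}=-x^k_{\overline{T}_k}$ on $\overline{T}_k$ (independent of $\alpha$). Applying \eqref{eq-2-4} to this displacement gives
\begin{align*}
f(x^k(\alpha)) \le f(x^k) + \alpha\langle g^k_{T_k},d^k_{T_k}\rangle + \langle g^k_{\overline{T}_k},d^k_{\overline{T}_k}\rangle + \tfrac{L}{2}\alpha^2\|d^k_{T_k}\|^2 + \tfrac{L}{2}\|x^k_{\overline{T}_k}\|^2 .
\end{align*}
Subtracting the target $\sigma\alpha\langle g^k,d^k\rangle$ and using $d^k_{\overline{T}_k}=-x^k_{\overline{T}_k}$, the claim \eqref{lem-2-2-1} reduces to showing
$$\alpha(1-\sigma)\langle g^k_{T_k},d^k_{T_k}\rangle + (1-\sigma\alpha)\langle g^k_{\overline{T}_k},d^k_{\overline{T}_k}\rangle + \tfrac{L}{2}\alpha^2\|d^k_{T_k}\|^2 + \tfrac{L}{2}\|x^k_{\overline{T}_k}\|^2 \le 0 ,$$
where $1-\sigma\alpha>0$ since $\sigma<1/2$ and $\alpha\le 1$.

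Next I would bound the two inner products, mirroring the case split of Lemma \ref{lem-2-1}. For the $T_k$-term I use the descent test \eqref{alg-2-1} when $d^k$ comes from \eqref{direction}, and $\langle g^k_{T_k},d^k_{T_k}\rangle=-\|d^k_{T_k}\|^2$ when the gradient step \eqref{alg-2-3} is taken. For the $\overline{T}_k$-term, note $\langle g^k_{\overline{T}_k},d^k_{\overline{T}_k}\rangle=-\langle g^k_{J_k},x^k_{J_k}\rangle$ by \eqref{2-1}, which vanishes when $S_k=\emptyset$ (then $x^k_{\overline{T}_k}=0$) and otherwise is controlled by the estimate \eqref{lem-2-1-4} already established inside Lemma \ref{lem-2-1}. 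Substituting these and collecting the coefficients of $\|d^k_{T_k}\|^2$ and of $\|x^k_{\overline{T}_k}\|^2$ turns the display into two scalar conditions that must be driven nonpositive.

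The heart of the argument is to show each collected coefficient is $\le 0$ under the stated parameter ranges. I expect the cross term $n\tau(L+\mu_k)^2$ to be tamed by $\tau\le\alpha\delta/(nL^2)$ together with $\tau\le\nu/(n(2L+D))$ and $\mu_k\le D$, which yield $n\tau(L+\mu_k)^2\le \alpha\delta+\nu\mu_k$; the $\mu_k$-proportional part is then annihilated by $\nu\le \alpha(1-\sigma)/(1-\alpha\sigma)$, and the residual $\|d^k_{T_k}\|^2$-coefficient is forced nonpositive exactly by $\alpha\le\overline{\alpha}$, i.e.\ by $\alpha\le(1-2\sigma)/(L/\delta-\sigma)$ and $\alpha\le 2(1-\sigma)\delta/L$. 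The genuinely nonstandard difficulty, and the main obstacle, is the $\alpha$-independent penalty $\tfrac{L}{2}\|x^k_{\overline{T}_k}\|^2$ arising from the full step on $\overline{T}_k$: because it does not shrink with $\alpha$, sufficient decrease is \emph{not} automatic for small $\alpha$, and this term can only be absorbed by the $-\tfrac{1}{\tau}\|x^k_{\overline{T}_k}\|^2$ produced in \eqref{lem-2-1-4} against the $+\tfrac{1}{4\tau}\|x^k_{\overline{T}_k}\|^2$ coming from \eqref{alg-2-1}; balancing these is precisely what pins down the requirement $\tau\le 1/(4L)$.

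Finally, for the ``moreover'' claim I would run the standard backtracking estimate. The first part shows that whenever $\tau<\overline{\tau}$ the Armijo test of Step S3 holds at $\alpha=\overline{\alpha}$ and at every admissible smaller trial value; the factors of $\beta$ built into $\overline{\tau}$ in \eqref{2-1-1} are chosen so that the $\tau$-versus-$\alpha$ constraints remain valid down to the largest power $\beta^{m}$ not exceeding $\overline{\alpha}$. Since backtracking accepts the first (largest) such power, the accepted step satisfies $\alpha_k=\beta^{m_k}\ge \beta^{m}>\beta\overline{\alpha}$, where the strict inequality follows from minimality of $m$ (so that $\beta^{m-1}>\overline{\alpha}$). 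As this holds at every iteration, $\liminf_{k\to\infty}\alpha_k\ge\beta\overline{\alpha}>0$.
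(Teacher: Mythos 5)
Your proposal is correct and follows essentially the same route as the paper's proof: the same strong-smoothness expansion reducing \eqref{lem-2-2-1} to the nonpositivity of $\psi=\alpha(1-\sigma)2\langle g^k_{T_k},d^k_{T_k}\rangle-(1-\alpha\sigma)2\langle g^k_{J_k},d^k_{J_k}\rangle+L[\alpha^2\|d^k_{T_k}\|^2+\|x^k_{\overline{T}_k}\|^2]$, the same case split on $S_k$ and on Newton versus gradient direction using \eqref{alg-2-1}, \eqref{lem-2-1-4} and \eqref{lem-2-1-5}, the same coefficient bookkeeping (your bound $n\tau(L+\mu_k)^2\le\alpha\delta+\nu\mu_k$ is exactly the paper's split into its $c_1$ and $c_3$ terms, and your balancing of $L\|x^k_{\overline{T}_k}\|^2$ against $-(1-\alpha\sigma)\|x^k_{\overline{T}_k}\|^2/\tau$ via $\tau\le1/(4L)$ is its $c_2$), and the same backtracking argument for $\liminf_k\alpha_k\ge\beta\overline{\alpha}$.
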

\begin{proof}
If $0 < \alpha \le \overline{\alpha}$ and $0 < \delta \le \min \{1, 2L\}$, then
\begin{align}\label{lem-2-2-2}
\alpha \le \frac{2(1-\sigma)\delta}{L}, \quad \alpha \le \frac{1-2\sigma}{L/\delta - \sigma} \le \frac{1-2\sigma}{\max\{0, L - \sigma\}}.
\end{align}
Since $f$ is strongly smooth, it holds that
\begin{align*}
&\quad\quad\  2f(x^k(\alpha)) - 2f(x^k) - 2\alpha \sigma \langle g^k, d^k \rangle \\
&\overset{\eqref{eq-2-4}}{\le} 2\langle g^k, x^k(\alpha) - x^k \rangle + L\|x^k(\alpha) - x^k\|^2 - 2\alpha \sigma \langle g^k, d^k \rangle \\
&\overset{\eqref{2.4}}{=} \alpha (1-\sigma)2\langle g^k_{T_k}, d^k_{T_k} \rangle - (1-\alpha \sigma)2\langle g^k_{\overline{T}_k}, d^k_{\overline{T}_k} \rangle + L[\alpha^2\|d^k_{T_k}\|^2 + \|x^k_{\overline{T}_k}\|^2 \\
&\ = \ \, \alpha (1-\sigma)2\langle g^k_{T_k}, d^k_{T_k} \rangle - (1-\alpha \sigma)2\langle g^k_{J_k}, d^k_{J_k} \rangle + L[\alpha^2\|d^k_{T_k}\|^2 + \|x^k_{\overline{T}_k}\|^2 =: \psi .
\end{align*}
We need to prove that $\psi \le 0$. Consider two cases.

\textbf{Case 1:} $S_k = \emptyset$. From the algorithm, we have $T_k = T_{k-1}$, thus $J_k = T_{k-1} \setminus T_k = \emptyset$. Therefore, we obtain that
\begin{align}\label{lem-2-2-3}
\psi &= \alpha (1-\sigma)2\langle g^k_{T_k}, d^k_{T_k} \rangle + L\alpha^2\|d^k_{T_k}\|^2 \nonumber \\
&\quad \begin{cases}
\overset{\eqref{alg-2-1}}{\le} -2\alpha (1-\sigma)\delta \|d^k\|^2 -2\alpha (1-\sigma)\mu_k \|d^k_{T_k}\|^2 + L\alpha^2\|d^k_{T_k}\|^2 \nonumber \\
\overset{\eqref{alg-2-3}}{\le} -2\alpha (1-\sigma) \|d^k_{T_k}\|^2 + L\alpha^2\|d^k_{T_k}\|^2
\end{cases} \nonumber \\
&\ \;  \le \ \, -2\alpha (1-\sigma)\delta \|d^k\|^2 + L\alpha^2\|d^k_{T_k}\|^2 \nonumber \\
&\ \;  = \ \, \alpha (L\alpha - 2(1-\sigma)\delta)\|d^k\|^2 \quad (\text{by}\  \|d^k\|^2 = \|d^k_{T_k}\|^2) \nonumber \\
&\overset{\eqref{lem-2-2-2}}{\le} 0.
\end{align}

\textbf{Case 2:} $S_k \neq \emptyset$. If $d^k$ from \eqref{direction}, by \eqref{lem-2-1-4} and \eqref{lem-2-1-5}, it holds that
\begin{align*}
\psi &\ \le \ \alpha (1-\sigma)[-2\delta \|d^k\|^2 - 2\mu_k\|d^k_{T_k}\|^2 + (1/2\tau)\|x^k_{T_k}\|^2] + L\alpha^2\|d^k_{T_k}\|^2 \\
&\quad \ \   + (1-\alpha \sigma)[n\tau (L+
\mu_k)^2 \|d^k_{T_k}\|^2 - \tau\|g_{T_{k-1}}^k\|^2 - \|x_{\overline{T}_k}^k \|^2/\tau] + L\|x_{\overline{T}_k}^k \|^2 \\
&\  \le \  -2\delta \alpha(1-\sigma)\|d^k\|^2 + (1-\alpha \sigma)n\tau L^2\|d^k\|^2 + L\alpha^2 \|d^k\|^2 \\
&\quad \ \   + \frac{\alpha(1-\sigma)}{2\tau}\|x_{\overline{T}_k}^k \|^2 - \frac{1-\alpha \sigma}{\tau}\|x_{\overline{T}_k}^k \|^2  + L\|x_{\overline{T}_k}^k \|^2 \\
&\quad \ \  + \|d^k_{T_k}\|^2[ -2\mu_k \alpha(1-\sigma) + (1-\alpha \sigma)n\tau (2\mu_k L + \mu_k^2)]\\
&\   \quad\ -(1-\alpha \sigma)\tau \|g_{T_{k-1}}^k\|^2 \\
&\   \le \  c_1 \|d^k\|^2 + c_2\|x_{\overline{T}_k}^k \|^2 + c_3\mu_k \|d^k_{T_k}\|^2 - (1-\alpha \sigma)\tau \|g_{T_{k-1}}^k\|^2 \\
&\ \le \  c_1 \|d^k\|^2 + c_2\|x_{\overline{T}_k}^k \|^2 + c_3\mu_k \|d^k_{T_k}\|^2.
\end{align*}
Because $\alpha \in (0,1)$, $\sigma \in (0,1/2]$, it holds that $1-\alpha \sigma > 0$.
\begin{align*}
c_1 &:= -2\delta \alpha(1-\sigma) + (1-\alpha \sigma)n\tau L^2 + L\alpha^2  \\
&\le \ -\alpha(1-\sigma)2\delta + (1-\alpha \sigma)\delta \alpha + L\alpha^2  \quad (\text{by}\ 1-\alpha \sigma > 0,\  \tau \le \alpha \delta / (nL^2) )\\
&= \ \alpha [(L-\sigma \delta)\alpha - (1-2\sigma)\delta] \le 0. \quad(\text{by}\ \eqref{lem-2-2-2},\ L-\alpha \sigma > 0,\  1-2\sigma > 0)
\end{align*}
and
\begin{align*}
c_2 &:= \alpha (1-\sigma)/(2\tau) - (1-\alpha \sigma)/\tau + L \\
&\le \, (1-\alpha \sigma)/(2\tau) - (1-\alpha \sigma)/\tau + L \quad (\text{by}\  1-\alpha \sigma > 0 )\\
&\le \, -(1-\alpha \sigma)/(2\tau) + L \le 0. \quad (\text{by}\  1-\alpha \sigma > 0,\  \tau \le 1/(4L))
\end{align*}
Moreover, it holds that
\begin{align*}
c_3 &\ := \ -2 \alpha(1-\sigma) + (1-\alpha \sigma)n\tau (2 L + \mu_k) \\
&\overset{\eqref{2-1-1}}{\le}  -2 \alpha(1-\sigma) + \frac{\nu(1-\alpha \sigma)(2 L + \mu_k)}{2L + D} \\
&\ \le \ \, -2 \alpha(1-\sigma) + \frac{\nu(1-\alpha \sigma)(2 L + D)}{2L + D} \\
&\ = \ -2 \alpha(1-\sigma) +\nu(1-\alpha \sigma) \\
&\  \le \ \; -2 \alpha(1-\sigma) +(1-\alpha \sigma)\frac{\alpha(1-\sigma)}{1-\alpha \sigma} \quad (\text{by}\ 1-\alpha \sigma >0,\  0< \nu \le \frac{\alpha(1-\sigma)}{1-\alpha \sigma} )\\
&\ = \  -\alpha + \alpha \sigma \le 0.
\end{align*}
If $d^k$ is the gradient step, that is $d^k_{T_k} = -g^k_{T_k}$ and $d^k_{\overline{T}_k} = -x^k_{\overline{T}_k}$, it holds that
\begin{align*}
\psi &\overset{\eqref{lem-2-1-4}}{\le} -2\alpha(1-\sigma) \|d^k_{T_k}\|^2 + L\alpha^2 \|d^k_{T_k}\|^2 \quad (\text{by}\  1-2\alpha \sigma > 0 )\\
&\quad \quad +  (1-\alpha \sigma)[n\tau \|g_{T_k}^k\|^2 - \tau \|g_{T_{k-1}}^k\|^2 - \|x_{\overline{T}_k}^k \|^2/\tau] + L\|x_{\overline{T}_k}^k \|^2 \\
&\ \le \ \,  c_4 \|d^k_{T_k}\|^2 + c_5\|x_{\overline{T}_k}^k \|^2 - (1-\alpha \sigma)\tau \|g_{T_{k-1}}^k\|^2 ,
\end{align*}
where $c_4$, $c_5$ be defined as
\begin{align*}
c_4 &:= -2\alpha(1-\sigma) + (1-\alpha \sigma)n \tau + L \alpha^2 \\
&\le -2\alpha(1-\sigma) + (1-\alpha \sigma)\alpha + L \alpha^2 \quad (\text{by}\ 1-\alpha \sigma >0, \tau \le \alpha/n) \\
&= \alpha[ (L-\sigma)\alpha - (1-2\sigma)] \\
&\le \alpha \left[\max \{0,L-\sigma \} \frac{1-2\sigma}{\max \{0, L - \sigma\}} - (1-2\sigma) \right], \quad (\text{by}\ 1-2\sigma > 0 \ \eqref{lem-2-2-2} ) \\
&\le \alpha \left[\max \{0,L-\sigma \}\frac{1-2\sigma}{\max \{0, L - \sigma\}} - (1-2\sigma) \right] \le 0.\quad (\text{by}\ 1-2\sigma > 0) \\
c_5 &:= -(1-\alpha \sigma)/\tau + L\\
&\le -1/(2\tau) + L\le 0. \  (\text{by }\ 1-\alpha \sigma \ge 1/2, \tau \le 1/(4\tau))
\end{align*}
Therefore, \eqref{lem-2-2-1} holds. If $\tau \in (0, \overline{\tau})$, then for any $\alpha \in [\beta \overline{\alpha}, \overline{\alpha}]$,  it holds that
\begin{align*}
0 < \tau &\overset{\eqref{2-1-1}}{<} \min \left\{ \overline{\alpha}\delta \beta /(nL^2), \overline{\alpha}\beta /n, 1/(4L), \frac{\nu}{n(2L+D)} \right\} \\
&\ \le\ \, \min \left\{  \frac{\alpha \delta}{nL^2},  \frac{\alpha}{n}, \frac{1}{4L}, \frac{\nu}{n(2L+D)} \right\},
\end{align*}
and $0< \nu \le \frac{\overline{\alpha} \beta(1-\sigma)}{1-\overline{\alpha} \beta\sigma} \le \frac{\alpha(1-\sigma)}{1-\alpha \sigma}$. Therefore, \eqref{lem-2-2-1} holds for any $\alpha \in [\beta \overline{\alpha}, \overline{\alpha}]$. According to the Armijo step size criterion, the sequence $\{\alpha_k\}$ has a lower bound of $\beta \overline{\alpha}$, that is
\begin{align}\label{lem-2-2-4}
\liminf\limits_{k\to \infty}\{\alpha_k \} \ge \beta \overline{\alpha} > 0.
\end{align}
The Lemma is proved.
\end{proof}

\begin{lemma}\label{lem-2-3}
Let $f$ be strongly smooth with respect to the constant $L > 0$, and let $\overline{\tau}$ be defined as above. The sequence $\{x^k\}$ is generated by MBNL0R, with $\tau \in (0, \overline{\tau})$ and $\delta \in (0, \min \{1, 2L\})$. Then, the sequence $\{f(x^k)\}$ is strictly decreasing, and
\begin{align}\label{lem-2-3-1}
\lim\limits_{k\to\infty} \max \{\|F_{\tau}(x^k; T_k)\|, \|x^{k+1}-x^{k}\|, \|g_{T_{k-1}}^{k}\|, \|g_{T_k}^k\|\} =0.
\end{align}
\end{lemma}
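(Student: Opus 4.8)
The plan is to combine the descent inequality of Lemma~\ref{lem-2-1} with the step-size lower bound of Lemma~\ref{lem-2-2}, together with the hypothesis that $f$ is bounded below, to first establish strict monotonicity of $\{f(x^k)\}$ and then squeeze each of the four quantities in \eqref{lem-2-3-1} to zero. Throughout I assume the algorithm generates an infinite sequence (otherwise the limit statement is vacuous), so that the stopping test in \textbf{S1} never triggers and $\|F_{\tau}(x^k; T_k)\| \ge \epsilon > 0$ for all $k$.

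First I would prove strict decrease. By the Armijo rule in step \textbf{S3}, $f(x^{k+1}) = f(x^k(\alpha_k)) \le f(x^k) + \sigma \alpha_k \langle g^k, d^k \rangle$, and Lemma~\ref{lem-2-1} gives $\langle g^k, d^k \rangle \le -\rho\|d^k\|^2 - (\tau/2)\|g^k_{T_{k-1}}\|^2 \le 0$ with $\rho > 0$. I would observe that the equality $\langle g^k, d^k \rangle = 0$ would force both $d^k = 0$ and $g^k_{T_{k-1}} = 0$; but $d^k = 0$ implies $x^k_{\overline{T}_k} = -d^k_{\overline{T}_k} = 0$ and, from either \eqref{direction} or \eqref{alg-2-3}, $g^k_{T_k} = 0$, so that $F_{\tau}(x^k; T_k) = 0$, contradicting $\|F_{\tau}(x^k; T_k)\| \ge \epsilon$. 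Hence $\langle g^k, d^k \rangle < 0$ at every iteration, and since $\alpha_k > 0$ we get $f(x^{k+1}) < f(x^k)$.

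Next, since $f$ is bounded below and $\{f(x^k)\}$ is strictly decreasing, it converges to some finite $f^*$, so $f(x^k) - f(x^{k+1}) \to 0$. Chaining the Armijo inequality with Lemma~\ref{lem-2-1} yields
\[
f(x^k) - f(x^{k+1}) \ge \sigma \alpha_k \left( \rho\|d^k\|^2 + \frac{\tau}{2}\|g^k_{T_{k-1}}\|^2 \right) \ge 0.
\]
By Lemma~\ref{lem-2-2}, $\liminf_k \alpha_k \ge \beta\overline{\alpha} > 0$, so for all large $k$ there is a uniform lower bound $\alpha_k \ge \underline{\alpha} := \beta\overline{\alpha}/2 > 0$. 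Substituting this bound and letting $k \to \infty$ forces the nonnegative quantity $\rho\|d^k\|^2 + (\tau/2)\|g^k_{T_{k-1}}\|^2$ to zero, and since $\rho, \tau > 0$ I conclude $\|d^k\| \to 0$ and $\|g^k_{T_{k-1}}\| \to 0$.

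Finally I would deduce the remaining two terms from $\|d^k\| \to 0$. The iterate difference satisfies $\|x^{k+1} - x^k\|^2 = \alpha_k^2 \|d^k_{T_k}\|^2 + \|d^k_{\overline{T}_k}\|^2 \le \|d^k\|^2$ because $\alpha_k \le 1$, so $\|x^{k+1} - x^k\| \to 0$. For $\|g^k_{T_k}\|$ I would split into the two direction cases: if $d^k$ comes from \eqref{direction}, then \eqref{lem-2-1-1} gives $\|g^k_{T_k}\| \le (L + \mu_k)\|d^k_{T_k}\| \le (L + D)\|d^k\|$ using $\mu_k \le D$, whereas if $d^k$ is the gradient step \eqref{alg-2-3} then $\|g^k_{T_k}\| = \|d^k_{T_k}\| \le \|d^k\|$; either way $\|g^k_{T_k}\| \to 0$. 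Then, using $F_{\tau}(x^k; T_k) = (g^k_{T_k}, x^k_{\overline{T}_k})$ from \eqref{eq-2-7} together with $x^k_{\overline{T}_k} = -d^k_{\overline{T}_k}$, I obtain $\|F_{\tau}(x^k; T_k)\|^2 = \|g^k_{T_k}\|^2 + \|d^k_{\overline{T}_k}\|^2 \to 0$, which completes \eqref{lem-2-3-1}. The only point requiring care is the $\|g^k_{T_k}\|$ estimate, where the boundedness $\mu_k \le D$ of the regularization parameter is essential to keep the factor $L + \mu_k$ uniformly bounded; this is the main obstacle, since the strict-decrease and step-size machinery are otherwise direct consequences of the two preceding lemmas.
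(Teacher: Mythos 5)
Your proposal is correct and follows essentially the same route as the paper's proof: chain the Armijo decrease with Lemma~\ref{lem-2-1}, use boundedness of $f$ from below and the step-size lower bound of Lemma~\ref{lem-2-2} to force $\|d^k\|\to 0$ and $\|g^k_{T_{k-1}}\|\to 0$, then read off the remaining three quantities exactly as the paper does (including the same two-case bound on $\|g^k_{T_k}\|$). Your explicit contradiction argument for strictness of the decrease and your correct placement of the factor $\sigma\alpha_k$ on both terms are minor refinements of details the paper glosses over, not a different approach.
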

\begin{proof}
By \eqref{lem-2-2-1}, \eqref{lem-2-1-0} and denoting $c_{0}:=\sigma \overline{\alpha} \beta \rho$, we obtain that
\begin{align*}
f(x^{k+1})-f(x^{k}) &\ \leq \ \  \sigma \alpha_{k}\langle g^{k}, d^{k}\rangle \\
&\stackrel{\eqref{lem-2-1-0}}{\leq}-\sigma \alpha_{k} \rho\|d^{k}\|^{2}-\frac{\tau}{2}\|g_{T_{k-1}}^{k}\|^{2} \\
&\stackrel{\eqref{lem-2-2-4}}{\leq}-c_{0}\|d^{k}\|^{2}-\frac{\tau}{2}\|g_{T_{k-1}}^{k}\|^{2} .
\end{align*}
Then, it follows from the above inequality that
$$
\begin{aligned}
\sum_{k=0}^{\infty}\left[c_{0}\|d^{k}\|^{2}+\frac{\tau}{2}\|g_{T_{k-1}}^{k}\|^{2}\right] & \leq \sum_{k=0}^{\infty}\left[f(x^{k})-f(x^{k+1})\right] \\
& =\left[f(x^{0})-\lim _{k \to \infty} f(x^{k})\right]<+\infty ,
\end{aligned}
$$
where the last inequality is due to $f$ being bounded from below. Hence $\|d^{k}\| \to 0$, $\|g_{T_{k-1}}^{k}\| \to 0$, which suffices to obtain that $\|x^{k+1}-x^{k}\| \to 0$ because of
$$
\|x^{k+1}-x^{k}\|^{2} \stackrel{\eqref{2.4}}{=} \alpha_{k}^{2}\|d_{T_{k}}^{k}\|^{2}+\|x_{\overline{T}_{k}}^{k}\|^{2} \leq\|d_{T_{k}}^{k}\|^{2}+\|d_{\overline{T}_{k}}^{k}\|^{2}=\|d^{k}\|^{2}.
$$
The above relation also indicates that $\|x_{\overline{T}_{k}}^{k}\|^{2} \rightarrow 0$. In addition, if $d^{k}$ is taken from \eqref{direction}, then $\|g_{T_{k}}^{k}\| \leq (L+\mu_k)\|d^{k}_{T_k}\| \to 0$ by \eqref{lem-2-1-1}. If it is taken from \eqref{alg-2-3} then $\|g_{T_{k}}^{k}\|=$ $\|d_{T_{k}}^{k}\| \to 0$. These results together with \eqref{eq-2-7} imply that $\|F_{\tau}\left(x^{k} ; T_{k}\right)\|^{2}=\|g_{T_{k}}^{k}\|^{2}+$ $\|x_{\overline{T}_{k}}^{k}\|^{2} \to 0$. The proof is complete.
\end{proof}

It can be conclude from Lemma \ref{lem-2-3} that the index set $T$ can be identified within finite steps and the sequence converges to a $\tau$-stationary point or a local minimizer globally that are presented by the following theorem.

\begin{theorem}{\rm{\cite[Theorem 3]{zhou2021newton}}
(Convergence and identification of $T_{k}$ )}\label{the-2-1} Let $f$ be strongly smooth with respect to the constant $L > 0$, and let $\overline{\tau}$ be defined as above. The sequence $\{x^k\}$ is generated by MBNL0R, with $\tau \in (0, \overline{\tau})$ and $\delta \in (0, \min \{1, 2L\})$. Then
\begin{description}
\item{(i)} For any sufficiently large $k$, $T_k \equiv T_{k-1} \equiv T_{\infty}$.
\item{(ii)} Any limit point $x^{*}$ satisfies
\begin{align}\label{the-2-1-1}
\nabla_{T_{\infty}}f(x^*) = 0,\quad x_{\overline{T}_{\infty}} = 0,\quad \text{supp}(x^*) \subset T_{\infty},
\end{align}
and $x^{*} \neq 0$. Furthermore, if $\tau_{*}$ satisfies
\begin{align}\label{the-2-1-2}
0 < \tau_{*} < \min \{ \overline{\tau}, \min\limits_{i \in \text{supp}(x^*)} |x_{i}^{*}|/(2\lambda)\},
\end{align}
then $x^{*}$ is a $\tau_{*}$-stationary point.
\item{(iii)} If $x^{*}$ is an isolated point, then the entire iteration sequence converges to $x^{*}$.
\end{description}
\end{theorem}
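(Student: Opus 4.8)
The engine for all three parts is the limit \eqref{lem-2-3-1} of Lemma \ref{lem-2-3}, which supplies $\|x^{k+1}-x^k\|\to 0$, $\|x^k_{\overline{T}_k}\|\to 0$ and, decisively, $\|g^k_{T_k}\|\to 0$. My plan for part (i) is to first show that the set $S_k=\tilde T_k\setminus T_{k-1}$ of newly entering indices is empty for all large $k$. Suppose not; then for infinitely many $k$ there is $i\in S_k$. Since $\mathrm{supp}(x^k)\subset T_{k-1}$ by \eqref{support} and $i\notin T_{k-1}$, we have $x_i^k=0$, so $i\in\tilde T_k$ forces $\tau|g_i^k|=|x_i^k-\tau g_i^k|\ge\sqrt{2\tau\lambda}$, i.e. $|g_i^k|\ge\sqrt{2\lambda/\tau}$. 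But $S_k\neq\emptyset$ means $T_k=\tilde T_k\ni i$, hence $\sqrt{2\lambda/\tau}\le|g_i^k|\le\|g^k_{T_k}\|\to 0$, a contradiction. Therefore $S_k=\emptyset$ eventually, the update rule then yields $T_k=T_{k-1}$ from that point on, and so $T_k\equiv T_{k-1}\equiv T_\infty$ for all large $k$.

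For part (ii), let $x^{k_j}\to x^*$ with $T_{k_j}=T_\infty$ for large $j$. Passing to the limit in $g^{k_j}_{T_\infty}=g^{k_j}_{T_{k_j}}\to 0$ and $x^{k_j}_{\overline{T}_\infty}\to 0$, and using continuity of $\nabla f$, gives $\nabla_{T_\infty}f(x^*)=0$ and $x^*_{\overline{T}_\infty}=0$, whence $\mathrm{supp}(x^*)\subset T_\infty$; this is \eqref{the-2-1-1}. To get $x^*\neq 0$ I would argue by contradiction using $\lambda<\underline\lambda$: if $x^*=0$, there is $i$ with $\nabla_i f(0)\neq 0$ and $\tfrac{\tau}{2}|\nabla_i f(0)|^2>\lambda$, i.e. $\tau|\nabla_i f(0)|>\sqrt{2\tau\lambda}$; since $x^{k_j}\to 0$ gives $|x^{k_j}_i-\tau g^{k_j}_i|\to\tau|\nabla_i f(0)|>\sqrt{2\tau\lambda}$, we get $i\in\tilde T_{k_j}$ for large $j$. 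If $i\in T_\infty=T_{k_j}$ then $|g^{k_j}_i|\le\|g^{k_j}_{T_{k_j}}\|\to 0$ forces $\nabla_i f(0)=0$, a contradiction; if $i\notin T_\infty=T_{k_j-1}$ then $i\in S_{k_j}\neq\emptyset$, contradicting part (i). Hence $x^*\neq 0$. Finally, for any $\tau_*$ obeying \eqref{the-2-1-2} I would verify the characterization \eqref{eq-2-3}: on $\mathrm{supp}(x^*)$ we already have $\nabla_i f(x^*)=0$, while $\tau_*$ is taken small enough that $\sqrt{2\tau_*\lambda}<|x_i^*|$ for every $i\in\mathrm{supp}(x^*)$; off the support, $\nabla_i f(x^*)=0$ when $i\in T_\infty$ and $|\nabla_i f(x^*)|\le\sqrt{2\lambda/\tau_*}$ holds automatically once $\tau_*$ is small, so $x^*$ is a $\tau_*$-stationary point.

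For part (iii) I would invoke the classical fact that a sequence with $\|x^{k+1}-x^k\|\to 0$ cannot have an isolated limit point unless it converges to it. Concretely, pick $\epsilon>0$ so small that $x^*$ is the only limit point in $\{x:\|x-x^*\|\le 2\epsilon\}$; if $x^k\not\to x^*$ the iterates enter and leave $\{x:\|x-x^*\|\le\epsilon\}$ infinitely often, and because consecutive steps become arbitrarily small they must land infinitely often in the compact annulus $\epsilon\le\|x-x^*\|\le 2\epsilon$. A convergent subsequence there yields a second limit point inside the $2\epsilon$-ball, contradicting isolation; hence the entire sequence converges to $x^*$.

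The main obstacle is part (i): ruling out indefinite cycling of the combinatorial index sets $T_k$. The decisive observation is that any index trying to enter $T_k$ from outside must carry a gradient component bounded below by the fixed constant $\sqrt{2\lambda/\tau}$, which is incompatible with $\|g^k_{T_k}\|\to 0$; this same lower bound, combined with the threshold built into $\underline\lambda$, is exactly what excludes $x^*=0$ in part (ii). Once finite identification is secured, parts (ii) and (iii) reduce to continuity arguments and the standard topological lemma above.
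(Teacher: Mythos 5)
Your proposal follows essentially the same route as the proof this paper defers to (\cite[Theorem 3]{zhou2021newton}): finite identification of $T_k$ by showing that any index entering through $S_k$ must carry $|g_i^k|\ge\sqrt{2\lambda/\tau}$, which is incompatible with $\|g^k_{T_k}\|\to 0$ from Lemma~\ref{lem-2-3}; continuity along a subsequence for \eqref{the-2-1-1}; the threshold $\lambda<\underline{\lambda}$ to exclude $x^*=0$; and the standard Ostrowski-type argument (vanishing successive differences plus an isolated accumulation point) for (iii). All of these steps are sound.

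The one place you assert rather than prove is the stationarity test for indices $i\notin T_{\infty}$: saying that $|\nabla_i f(x^*)|\le\sqrt{2\lambda/\tau_*}$ ``holds automatically once $\tau_*$ is small'' establishes the claim only for sufficiently small $\tau_*$, whereas the theorem asserts it for every $\tau_*$ in the range \eqref{the-2-1-2}. The missing observation is that once $S_k=\emptyset$ the update rule forces $\tilde T_k\subseteq T_{k-1}=T_{\infty}$, so for every $i\notin T_{\infty}$ and all large $k$ one has $|x_i^k-\tau g_i^k|<\sqrt{2\tau\lambda}$ by the definition \eqref{eq-2-6}; passing to the limit along the convergent subsequence and using $x_i^*=0$ gives $\tau|\nabla_i f(x^*)|\le\sqrt{2\tau\lambda}$, i.e.\ the quantitative bound $|\nabla_i f(x^*)|\le\sqrt{2\lambda/\tau}$ that the characterization \eqref{eq-2-3} requires, and which only relaxes as the stationarity parameter decreases below $\tau$. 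With that insertion (which is exactly how the cited reference argues), your proof is complete and matches the intended one.
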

The proof of Theorem \ref{the-2-1} is the same as that in \cite[Theorem 3]{zhou2021newton}. For brevity, the proof is omitted here.

\begin{theorem}\label{the-2-2}
{\rm{(Global convergence)}} Let $ \{x^{k} \}$ be the sequence generated by MBNL0R with $\tau \in(0, \overline{\tau})$ and $\delta \in (0, \min  \{1, \ell_{*} \} )$ and $x^{*}$ be one of its accumulating points. Suppose $f$ is strongly smooth with constant $L>0$ and locally strongly convex with $\ell_{*}>0$ around $x^{*}$. Then, the following results hold.

(i) The whole sequence converges to $x^{*}$, a strictly local minimizer of \eqref{L0}.

(ii) The Newton direction is always accepted for sufficiently large $k$.
\end{theorem}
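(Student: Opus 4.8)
The plan is to combine the finite identification of the support from Theorem~\ref{the-2-1} with local strong convexity: first use strong convexity to show that $x^*$ is an isolated accumulating point (which forces convergence of the whole sequence), then upgrade $\tau$-stationarity to strict local minimality, and finally verify the Newton acceptance test \eqref{alg-2-1} directly in a neighbourhood of $x^*$.

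For part (i): by Theorem~\ref{the-2-1}(i), for all large $k$ we have $T_k \equiv T_{k-1} \equiv T_\infty$, $\mathrm{supp}(x^k)\subset T_\infty$ and $x^k_{\overline{T}_\infty}=0$, and by \eqref{the-2-1-1} every accumulating point $y$ satisfies $\nabla_{T_\infty}f(y)=0$ and $y_{\overline{T}_\infty}=0$. Since $f$ is strongly convex with modulus $\ell_*$ on a neighbourhood $N$ of $x^*$, the principal submatrix obeys $\nabla^2_{T_\infty}f\succeq \ell_* I$ on $N$, so the reduced map $y\mapsto \nabla_{T_\infty}f(y)$ restricted to $\{x:x_{\overline{T}_\infty}=0\}$ is strongly monotone and therefore has $x^*$ as its unique zero in $N$; thus $x^*$ is an isolated accumulating point and Theorem~\ref{the-2-1}(iii) yields $x^k\to x^*$. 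For strict minimality, Theorem~\ref{the-2-1}(ii) supplies a $\tau_*$ for which $x^*$ is $\tau_*$-stationary, so \eqref{eq-2-3} holds with the strict inequalities $|x^*_i|>\sqrt{2\tau_*\lambda}$ on $\mathrm{supp}(x^*)$; I would then argue by cases on a competitor $y\ne x^*$ near $x^*$: if $\mathrm{supp}(y)=\mathrm{supp}(x^*)$ strong convexity of $f$ on the corresponding subspace gives $f(y)>f(x^*)$, while if the support differs the strict inequalities \eqref{eq-2-3} bound the change in $f$ against the penalty change $\lambda(\|y\|_0-\|x^*\|_0)$, giving $f(y)+\lambda\|y\|_0>f(x^*)+\lambda\|x^*\|_0$.

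For part (ii): since $x^k\to x^*$, $T_k\equiv T_\infty$ and $x^k_{\overline{T}_k}=0$ for large $k$, we have $\|d^k\|=\|d^k_{T_k}\|$, and $x^k\in N$ gives $Q_k=\nabla^2_{T_k}f(x^k)+\mu_k I\succeq(\ell_*+\mu_k)I$ with $\mu_k\ge 0$ (indeed $\mu_k\to 0$ by Lemma~\ref{lem-2-3}). The Newton direction \eqref{direction} then gives
\begin{align*}
\langle g^k_{T_k},d^k_{T_k}\rangle=-\langle Q_k d^k_{T_k},d^k_{T_k}\rangle\le-(\ell_*+\mu_k)\|d^k_{T_k}\|^2 .
\end{align*}
Because $x^k_{\overline{T}_k}=0$, the test \eqref{alg-2-1} reduces to $\langle g^k_{T_k},d^k_{T_k}\rangle\le-(\delta+\mu_k)\|d^k_{T_k}\|^2$, which holds since $\delta<\ell_*$ forces $-(\ell_*+\mu_k)\|d^k_{T_k}\|^2\le-(\delta+\mu_k)\|d^k_{T_k}\|^2$. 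Hence the Newton direction satisfies \eqref{alg-2-1} and is accepted for all large $k$.

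The main obstacle I anticipate is the isolation and strict-minimality step, not the acceptance test: once convergence is known, (ii) is essentially a one-line consequence of $\delta<\ell_*$ together with the cancellation of the $\mu_k$ terms, whereas establishing that $x^*$ is the unique nearby limit point and a \emph{strict} local minimizer of the $\ell_0$-regularized objective requires carefully marrying the subspace strong convexity to the support-dependent conditions \eqref{eq-2-3}, especially ruling out competitors whose support differs from $\mathrm{supp}(x^*)$.
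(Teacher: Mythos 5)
Your proposal is essentially correct and follows the same overall route as the paper: establish that $x^*$ is isolated (hence the whole sequence converges by Theorem~\ref{the-2-1}(iii)), show strict local minimality by a case split on the support of a competitor, and verify \eqref{alg-2-1} from $\nabla^2_{T_k}f(x^k)\succeq \ell_* I$ and $\delta<\ell_*$. Two of your choices differ in detail and are both legitimate. For isolation you invoke strong monotonicity of $y\mapsto\nabla_{T_\infty}f(y)$ on the subspace $\{y: y_{\overline{T}_\infty}=0\}$, which pins down $x^*$ as the unique nearby point satisfying \eqref{the-2-1-1}; the paper instead gets isolation as a byproduct of the strict-minimality computation (uniqueness of the strict minimizer in $N(x^*)$). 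For (ii) you first use $x^k_{\overline{T}_k}=0$ (valid for large $k$ by \eqref{support} and $T_k\equiv T_\infty$) to collapse the test to $\langle g^k_{T_k},d^k_{T_k}\rangle\le-(\delta+\mu_k)\|d^k_{T_k}\|^2$; the paper keeps the $\|x^k_{\overline{T}_k}\|^2/(2\tau)$ term and absorbs $2\ell_*\|x^k_{\overline{T}_k}\|^2\le 2L\|x^k_{\overline{T}_k}\|^2\le\|x^k_{\overline{T}_k}\|^2/(2\tau)$ via $\tau\le 1/(4L)$. Your shortcut is cleaner and correct.

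The one place that needs more than a gesture is the ``support differs'' case of strict minimality. First, you must record that for $y$ near $x^*$ one has $T_*:=\operatorname{supp}(x^*)\subseteq\operatorname{supp}(y)$ (because $\min_{i\in T_*}|x_i^*|>0$ bounds the radius); only then does ``support differs'' imply $\|y\|_0\ge\|x^*\|_0+1$, so the penalty gain is at least $\lambda$ rather than possibly negative. Second, the componentwise bounds $|\nabla_i f(x^*)|\le\sqrt{2\tau_*\lambda}$ from \eqref{eq-2-3} do not by themselves control the cross term $\langle\nabla_{\overline{T}_*}f(x^*),y_{\overline{T}_*}\rangle$; you also need $\|y_{\overline{T}_*}\|$ small. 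The paper does this by fixing the radius $\epsilon_*\le\lambda/(2\|\nabla_{\overline{T}_*}f(x^*)\|)$, so that the cross term is at least $-\lambda/2$ and is dominated by the penalty jump $\lambda$ plus the strong-convexity quadratic. With those two additions your argument closes; as written it is a sketch with the key quantitative step left implicit, which you yourself flag.
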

\begin{proof}
(i) Denote $T_{*}:=\operatorname{supp} (x^{*} )$. Theorem \ref{the-2-1} shows that $\nabla_{T_{*}} f (x^{*} )=0$ and $x^{*} \neq 0$. Consider a local region $N (x^{*} ):= \{x \in \mathbb{R}^{n}:\|x-x^{*}\|<\epsilon_{*} \}$, where
$$
\epsilon_{*}:=\min  \left\{\lambda / (2\|\nabla_{\overline{T}_{*}} f (x^{*} )\| ), \min _{i \in T_{*}} |x_{i}^{*} | \right\}.
$$
For any $x (\neq x^{*} ) \in N (x^{*} )$, we have $T_{*} \subseteq \operatorname{supp}(x)$. In fact if there is a $j$ such that $j \in T_{*}$ but $j \notin \operatorname{supp}(x)$, then we derive a contradiction:
$$
\epsilon_{*} \leq \min _{i \in T_{*}} |x_{i}^{*} | \leq |x_{j}^{*} |= |x_{j}^{*}-x_{j} | \leq\|x-x^{*}\|_{2}<\epsilon_{*} .
$$
Since $f$ is locally strongly convex with $\ell_{*}>0$ around $x^{*}$, for any $x (\neq x^{*} ) \in N (x^{*} )$, it is true that
$$
\begin{aligned}
& f(x)+\lambda\|x\|_{0}-f (x^{*} )-\lambda\|x^{*}\|_{0} \\
\geq &  \langle\nabla f (x^{*} ), x-x^{*} \rangle+ (\ell_{*} / 2 )\|x-x^{*}\|^{2}+\lambda\|x\|_{0}-\lambda\|x^{*}\|_{0} \\
= &  \langle\nabla_{\overline{T}_{*}} f (x^{*} ), x_{\overline{T}_{*}} \rangle+ (\ell_{*} / 2 )\|x-x^{*}\|^{2}+\lambda\|x\|_{0}-\lambda\|x^{*}\|_{0}=: \phi .
\end{aligned}
$$
where the first equality is due to $\nabla_{T_{*}} f (x^{*} )=0$. Clearly, if $T_{*}=\operatorname{supp}(x)$, then $x_{\overline{T}_{*}}=0,\|x\|_{0}=\|x^{*}\|_{0}$ and hence $\phi= (\ell_{*} / 2 )\|x-x^{*}\|^{2}>0$. If $T_{*} \neq(\subseteq) \operatorname{supp}(x)$, then $\|x\|_{0} \geq\|x^{*}\|_{0}+1$ and thus we obtain
$$
\begin{aligned}
\phi & \geq-\|\nabla_{\overline{T}_{*}} f (x^{*} )\|\|x_{\overline{T}_{*}}\|+ (\ell_{*} / 2 )\|x-x^{*}\|^{2}+\lambda \\
& \geq-\|\nabla_{\overline{T}_{*}} f (x^{*} )\|\|x-x^{*}\|+ (\ell_{*} / 2 )\|x-x^{*}\|^{2}+\lambda \\
& \geq-\lambda / 2+ (\ell_{*} / 2 )\|x-x^{*}\|^{2}+\lambda>0.
\end{aligned}
$$
Both cases show that $x^{*}$ is a strictly local minimizer of \eqref{L0} and is unique in $N (x^{*} )$, namely, $x^{*}$ is isolated local minimizer in $N (x^{*} )$. Therefore, the whole sequence tends to $x^{*}$ by Theorem \ref{the-2-1}).

(ii) We first verify that $Q_{k}$ is nonsingular when $k$ is sufficiently large and
$$
2  \langle g_{T_{k}}^{k}, d_{T_{k}}^{k} \rangle \leq -2\delta \|d^k\|^2 + \|x^k_{\overline{T}_{k}}\|/(2\tau) - 2\mu_k \|d^{k}_{\overline{T}_{k}}\|^2 .
$$
Since $f$ is strongly smooth with $L$ and locally strongly convex with $\ell_{*}$ around $x^{*}$, it holds that
\begin{align}\label{the-2-2-2}
\ell_{*} \leq  \lambda_{i} (\nabla^2_{T_k}f(x^k) ) \leq L ,\ \ell_{*} \leq  \lambda_{i} (Q_{k} ) \leq L + \mu_k,
\end{align}
where $\lambda_{i}(A)$ is the $i$ th largest eigenvalue of $A$. Direct verification yields that
$$
\begin{aligned}
2 \langle g_{T_{k}}^{k}, d_{T_{k}}^{k} \rangle &\, \stackrel{\eqref{direction}}{=} \, -2\langle (Q_{k} + \mu_k I)d^k_{T_{k}}, d^k_{T_{k}} \rangle \\
& \stackrel{\eqref{the-2-2-2}}{\leq}-2\ell_{*} \|d_{T_{k}}^{k}\|^{2} - 2\mu_k \|d_{T_{k}}^{k}\|^{2} \\
&\ \; = \ \; -2\ell_{*}(\|d^k_{T_k}\|^2 + \|d^k_{J_k}\|^2 - \|d^k_{J_k}\|^2) - 2\mu_k \|d^k_{T_k}\|^2 \\
&\ \; = \ \; -2 \ell_{*}\|d_{T_{k} \cup J_{k}}^{k}\|^{2}+2\ell_{*}\|d_{J_{k}}^{k}\|^{2} - 2\mu_k \|d^k_{T_k}\|^2 \\
&\, \stackrel{\eqref{2-1}}{=} \, -2 \ell_{*}\|d^{k}\|^{2} + 2\ell_{*}\|x_{\overline{T}_{k}}^{k}\|^{2} - 2\mu_k \|d^k_{T_k}\|^2\\
&\ \; \le \ \; -2 \ell_{*}\|d^{k}\|^{2}+2 L\|x_{\overline{T}_{k}}^{k}\|^{2} - 2\mu_k \|d^k_{T_k}\|^2 \\
&\ \; \le \ \; -2 \delta\|d^{k}\|^{2}+\|x_{\overline{T}_{k}}^{k}\|^{2} /(2 \tau) - 2\mu_k \|d^k_{T_k}\|^2,
\end{aligned}
$$
where the last inequality is due to $\delta \leq \ell_{*}$ and $\tau<\overline{\tau} \leq 1 /(4 L)$. This proves that $d^{k}$ from \eqref{direction} is always admitted for sufficiently large $k$. The proof is finished.
\end{proof}
If the regularization parameter $\mu_k = 0$, MBNL0R will ultimately have the same iteration format as NL0R \cite{zhou2021newton}. This means that our proposed algorithm not only requires less matrix computation, but can also ultimately achieve the same quadratic convergence rate as NL0R \cite{zhou2021newton}. These will be demonstrated by the following Theorem.
\begin{theorem}
{\rm{(Quadratic convergence)}} Let $ \{x^{k} \}$ be the sequence generated by MBNL0R with $\tau \in(0, \overline{\tau})$ and $\delta \in (0, \min  \{1, \ell_{*} \} )$ and $x^{*}$ be one of its accumulating points. Suppose $f$ is strongly smooth with constant $L>0$ and locally strongly convex with $\ell_{*}>0$ around $x^{*}$. Furthermore, if the Hessian of $f$ is locally Lipschitz continuous around $x^*$ with constant $M_* > 0$. Then, there exist $C > 0.5M_*$ such that
\begin{align}\label{eq-quadraticly}
\|x^{k+1} - x^*\| \le C/\ell_*\|x^k - x^*\|^2.
\end{align}
\end{theorem}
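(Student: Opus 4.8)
The plan is to reduce the entire iteration to the fixed coordinate block $T_* := \operatorname{supp}(x^*)$ and then run the classical Newton error analysis, with the only extra care being that the regularization $\mu_k$ must be shown to be quadratically small so as not to spoil the rate. First I would invoke Theorem~\ref{the-2-1} and Theorem~\ref{the-2-2}: for all sufficiently large $k$ one has $T_k \equiv T_{k-1} \equiv T_\infty = T_*$, the iterate converges $x^k \to x^*$, the Newton direction \eqref{direction} is accepted, and by \eqref{support} together with $x^*_{\overline{T}_*}=0$ both $x^k - x^*$ and $d^k$ are supported on $T_*$. Consequently $\|x^{k+1}-x^*\| = \|x^{k+1}_{T_*}-x^*_{T_*}\|$, and since \eqref{direction} gives $Q_k d^k_{T_*} = -g^k_{T_*}$ with $\nabla_{T_*}f(x^*)=0$, everything can be phrased on the $T_*$-subspace.

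The next step, which I expect to be the crux, is to show that the unit step $\alpha_k = 1$ is eventually taken, so that $x^{k+1}_{T_*} = x^k_{T_*} - Q_k^{-1} g^k_{T_*}$. Lemma~\ref{lem-2-2} only guarantees acceptance for $\alpha \le \overline{\alpha}$, which may be strictly below $1$, so I would argue directly near the strong minimizer. Expanding $f(x^k + d^k)$ to second order with a Lipschitz-Hessian remainder, using $g^k_{T_*} = -Q_k d^k_{T_*}$ and the eigenvalue bounds \eqref{the-2-2-2} ($\ell_* \le \lambda_i(\nabla^2_{T_*}f(x^k)) \le L$), one gets
$$ (1-\sigma)\langle g^k, d^k\rangle + \tfrac12 \langle d^k, \nabla^2 f(x^k) d^k\rangle \le -\bigl(\tfrac12 - \sigma\bigr)\ell_* \|d^k_{T_*}\|^2. $$
Since $\sigma < 1/2$ and the cubic remainder is $O(\|d^k_{T_*}\|^3) = o(\|d^k_{T_*}\|^2)$ as $d^k \to 0$, the Armijo test at $\beta^0 = 1$ is satisfied for large $k$, forcing $m_k = 0$ and $\alpha_k = 1$.

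With the unit step in hand I would run the recursion
$$ x^{k+1}_{T_*} - x^*_{T_*} = Q_k^{-1}\bigl[\,Q_k(x^k_{T_*}-x^*_{T_*}) - g^k_{T_*}\,\bigr], $$
write $g^k_{T_*} = \nabla_{T_*}f(x^k) - \nabla_{T_*}f(x^*) = \int_0^1 \nabla^2_{T_*}f(x^* + t(x^k - x^*))(x^k - x^*)_{T_*}\,dt$ (valid because $x^k - x^*$ is supported on $T_*$), and bound the bracket by $[(M_*/2)\|x^k - x^*\| + \mu_k]\,\|x^k - x^*\|$ using Lipschitz continuity of the Hessian and $\|Q_k - \nabla^2_{T_*}f(x^k)\| = \mu_k$. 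Combined with $\|Q_k^{-1}\| \le 1/\ell_*$ from \eqref{the-2-2-2}, this yields
$$ \|x^{k+1}-x^*\| \le \frac{1}{\ell_*}\Bigl[\tfrac{M_*}{2}\|x^k - x^*\| + \mu_k\Bigr]\|x^k - x^*\|. $$

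Finally I would control the regularization term: for large $k$, $x^k_{\overline{T}_k}=0$ gives $\|F_\tau(x^k;T_k)\| = \|g^k_{T_*}\|$, so by strong smoothness $\mu_k \le \|F_\tau(x^k;T_k)\|^2 = \|\nabla_{T_*}f(x^k) - \nabla_{T_*}f(x^*)\|^2 \le L^2\|x^k - x^*\|^2$. Substituting shows $\mu_k$ contributes only at order $\|x^k-x^*\|^3$, hence
$$ \|x^{k+1}-x^*\| \le \frac{1}{\ell_*}\Bigl[\tfrac{M_*}{2} + L^2\|x^k - x^*\|\Bigr]\|x^k - x^*\|^2. $$
Because $\|x^k - x^*\| \to 0$, for any fixed $C > M_*/2$ the bracketed factor is eventually $\le C$, which gives \eqref{eq-quadraticly}. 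The one genuinely delicate point is the unit-step verification, since it is what turns the accepted direction into a true Newton step; handling $\mu_k$ is then routine precisely because it is quadratically small.
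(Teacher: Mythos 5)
Your proposal is correct, and its core error estimate coincides with the paper's: both reduce everything to the frozen block $T_\infty$ supplied by Theorem \ref{the-2-1}, write $Q_k(x^k-x^*)_{T_k}-g^k_{T_k}$ via the integral mean-value form of the gradient, extract the factor $M_*/2$ from Lipschitz continuity of the Hessian, bound $\|Q_k^{-1}\|\le 1/\ell_*$ by \eqref{the-2-2-2}, and kill the regularization by $\sqrt{\mu_k}\le\|F_\tau(x^k;T_k)\|\le L\|x^k-x^*\|$, so that $\mu_k$ only contributes at third order. Where you genuinely diverge is the step-size issue. The paper first absorbs a general $\alpha_k$ through convexity of $\|\cdot\|^2$ (inequality \eqref{eq-3.41}) and then appeals to an external result (Facchinei's Theorem 3.3) to conclude $\alpha_k=1$ eventually; you instead verify the Armijo test at $\alpha=1$ directly by a second-order Taylor expansion, using $g^k_{T_k}=-Q_kd^k_{T_k}$ and $\sigma<1/2$ to get $(1-\sigma)\langle g^k,d^k\rangle+\tfrac12\langle d^k,\nabla^2f(x^k)d^k\rangle\le-(\tfrac12-\sigma)\ell_*\|d^k\|^2$, with the cubic remainder $o(\|d^k\|^2)$. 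Your route is self-contained and arguably cleaner, since the paper's citation requires checking hypotheses it does not spell out (and its stated descent condition $\langle\nabla f(x^k),d^k\rangle\le\rho\|d^k\|^2$ is a sign typo for Lemma \ref{lem-2-1}). Two minor points to tidy: the fixed index set is $T_\infty$, which is only guaranteed to \emph{contain} $\operatorname{supp}(x^*)$, so the block you should work on is $T_\infty$ rather than $T_*=\operatorname{supp}(x^*)$ (the argument is unchanged since $g^k_{T_\infty}\to\nabla_{T_\infty}f(x^*)=0$ and $x^*_{\overline{T}_\infty}=0$); and your Taylor expansion implicitly needs the segment $[x^k,x^k+d^k]$ to lie in the neighborhood where the Hessian is Lipschitz, which holds for large $k$ because $x^k\to x^*$ and $d^k\to 0$.
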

\begin{proof}
From \eqref{support} and claim (i) in Theorem \ref{the-2-1}, we observe that $x^k_{T_k} \equiv 0$ for sufficiently large $k$.
Without loss of generality, we assume that $x^k_{T_k}=0$ in this proof. Hence, the direction update scheme in \eqref{direction} is equivalent to the following form.
\begin{align}\label{eq-zhounewton}
\begin{cases}
(\nabla_{T_k}^{2}f(x^k) + \mu_k I)d_{T_k}^k &= \nabla^2_{T_k, \overline{T}_k}f(x^k)x_{\overline{T}_k}^k - \nabla_{T_k}f(x^k) , \\
d_{\overline{T}_k}^k &= -x_{\overline{T}_k}^k.
\end{cases}
\end{align}
Also by Theorem \ref{the-2-1}, for sufficiently large $k$, we have \eqref{the-2-1-1}, namely
\begin{align}\label{the-2-2-3}
g_{T_{k}}^* = 0,\quad x_{\overline{T}_{k}}^* = 0.
\end{align}
For any $0 \leq t \leq 1$, by letting $x(t):=x^{*}+t\left(x^{k}-x^{*}\right)$. The Hessian of $f$ being locally Lipschitz continuous at $x^{*}$ derives
\begin{align}\label{the-2-2-4}
\left\|\nabla_{T_{k}:}^{2} f\left(x^{k}\right)-\nabla_{T_{k}:}^{2} f(x(t))\right\|_{2} \leq M_{*}\left\|x^{k}-x(t)\right\|=(1-t) M_{*}\left\|x^{k}-x^{*}\right\| .
\end{align}
Moreover, by Taylor expansion, we obtain
\begin{align}\label{the-2-2-5}
\nabla f(x^k) - \nabla f(x^* ) =
\int_{0}^{1} \nabla^2 f(x(t) )(x^k - x^*)dt .
\end{align}
By \eqref{support} and \eqref{the-2-2-3}, there is $\| x^{k+1}_{\overline{T}_k} - x^*_{\overline{T}_k} \|=0$. Together with \eqref{2.4}, it holds that
\begin{align}\label{eq-xk1alpha}
\| x^{k+1} - x^* \|^2 &= \| x^{k+1}_{T_k} - x^*_{T_k} \|^2 + \| x^{k+1}_{\overline{T}_k} - x^*_{\overline{T}_k} \|^2 \nonumber \\
&= \| x^{k}_{T_k} - x^*_{T_k} + \alpha_k d_{T_k}^k \|^2  .
\end{align}
By \eqref{eq-xk1alpha} , we have the following chain of inequalities
\begin{align}\label{eq-3.37}
\| x^{k+1} - x^* \|^2 &\ \, = \ \, \| x^{k}_{T_k} - x^*_{T_k} + \alpha_k d_{T_k}^k \|^2 \nonumber \\
&\ \, = \ \, \|(1 - \alpha_k)(x^{k}_{T_k} - x^*_{T_k}) + \alpha_k( x^{k}_{T_k} - x^*_{T_k} +  d_{T_k}^k ) \|^2 \nonumber \\
&\ \, \le \ \, (1 - \alpha_k)\| x^{k}_{T_k} - x^*_{T_k} \|^2 + \alpha_k \| x^{k}_{T_k} - x^*_{T_k} +  d_{T_k}^k \|^2 .
\end{align}
where the last inequality is due to $\| \cdot \|^2$ being a convex function.
Combine with \eqref{eq-3.37} and \eqref{lem-2-2-4}, we obtain that
\begin{align}\label{eq-3.38}
\| x^{k+1} - x^* \|^2 \le (1 - \beta \overline{\alpha})\| x^{k} - x^* \|^2 + \overline{\alpha} \| x^{k}_{T_k} - x^*_{T_k} +  d_{T_k}^k \|^2.
\end{align}
Denote $Q_k = \nabla_{T_k}^2 f(x^k) + \mu_k I$. From claim (ii) in Theorem \ref{the-2-2}, $d^k$ will be updated eventually by \eqref{eq-zhounewton}. Hence, by \eqref{eq-zhounewton} and \eqref{the-2-2-2}, for sufficiently large $k$, it holds that
\begin{align}\label{eq-lxk-1}
&\quad \ {\ell_*}  \|x_{ T_k}^{k}-x_{ T_k}^{*}+d_{ T_k}^{k} \| \nonumber\\
&=\ell_*  \| Q_k^{-1}[\nabla_{T_k,\overline{T}_k}^{2}f(x^k)x_{\overline{T}_k}^k - g_{T_k}^k] + x_{T_k}^k - x_{T_k}^*  \| \nonumber\\
&\le \|\nabla_{T_k,\overline{T}_k}^{2}f(x^k)x_{\overline{T}_k}^k - g_{T_k}^k + Q_k(x_{T_k}^k - x_{T_k}^*) \| .
\end{align}
Denote $z^k = x^k - \tau \nabla f(x^k)$, by $g_{T_{k}}^* = 0$ and $x_{\overline{T}_{k}}^* = 0$ from  \eqref{the-2-2-3}, it holds that
\begin{align}\label{eq-lxk-2}
&\quad \, \nabla_{T_k,\overline{T}_k}^{2}f(x^k)x_{\overline{T}_k}^k - g_{T_k}^k + (\nabla_{T_k, T_k}^2f(x^k) + \mu_k I)(x_{T_k}^k - x_{T_k}^*)  \nonumber\\
&= \nabla_{T_k :}^2f(x^k)x^k - \nabla_{T_k, T_k}^2f(x^k) x_{T_k}^* - g_{T_k}^k + \mu_k I(x_{T_k}^k - x_{T_k}^*) \nonumber\\
&= \nabla_{T_k :}^2f(x^k)x^k - \nabla_{T_k :}^2f(x^k) x^* - g_{T_k}^k  + \nabla_{T_k, \overline{T}_k}^2f(x^k) x^*_{\overline{T}_k} + \mu_k I(x_{T_k}^k - x_{T_k}^*) \nonumber\\
&= \nabla_{T_k :}^2f(x^k)x^k - \nabla_{T_k :}^2f(x^k) x^* - g_{T_k}^k + g_{T_k}^* + \mu_k I(x_{T_k}^k - x_{T_k}^*), \ (\mbox{by}\ \eqref{the-2-2-3}).
\end{align}
Combine with \eqref{eq-lxk-1} and \eqref{eq-lxk-2}, we obtain that
\begin{align}\label{eq-xdtheta}
&\quad\ {\ell_*}  \|x_{ T_k}^{k}-x_{ T_k}^{*}+d_{ T_k}^{k} \| \nonumber\\
&\le \| \nabla_{T_k :}^{2}f(x^k)x^k - g_{T_k}^k  - \nabla_{T_k :}^{2}f(x^k)x^* + g_{T_k}^* \| + \mu_k \|x_{T_k}^k - x_{T_k}^*\|.
\end{align}
By \eqref{eq-xdtheta} and \eqref{the-2-2-5}, it holds that
\begin{align*}
&\quad\  {\ell_*} \left\|x_{ T_k}^{k}-x_{ T_k}^{*}+d_{ T_k}^{k}\right\| \\
&\le \left\| \nabla_{T_k :}^{2}f(x^k)x^k - g_{T_k}^k  - \nabla_{T_k :}^{2}f(x^k)x^* + g_{T_k}^* \right\| + \mu_k \|x_{T_k}^k - x_{T_k}^*\|\\
&\le \left\|  \int_{0}^{1}[ \nabla_{T_k :}^2 f(x^k) - \nabla_{T_k :}^2 f(x(t))] (x^k - x^*) dt \right\| + \mu_k \|x_{T_k}^k - x_{T_k}^*\|\\
&\le \int_{0}^{1}\|  \nabla_{T_k,:}^2 f(x^k) - \nabla_{T_k,:}^2 f(x(t))\| \|(x^k - x^*) \|dt + \mu_k \|x_{T_k}^k - x_{T_k}^*\|\\
&\le M_{*}\| x^k - x^*\|^2 \int_{0}^{1}(1-t)dt + \mu_k \|x_{T_k}^k - x_{T_k}^*\|\\
&\le 0.5 M_{*}\| x^k - x^*\|^2 + \mu_k \|x_{T_k}^k - x_{T_k}^*\|.
\end{align*}
By Algorithm \ref{algorithm-NL0R}, there is $\mu_k \le \|F_{\tau}(x^k; T_k)\|^2$, together with \eqref{the-2-2-3}, we obtain that
\begin{align*}
\sqrt{\mu_k}\le
\begin{Vmatrix}
g_{T_k}^k - g_{T_k}^* \\
x_{\overline{T}_k}^k - x_{\overline{T}_k}^*
\end{Vmatrix}
= \|g_{T_k}^k - g_{T_k}^*\| \le L\|x_{T_k}^k - x_{T_k}^*\| .
\end{align*}
Combine with $\|F_{\tau}(x^k; T_k)\| \to 0$, there exits $C > 0.5M_*$ such that
\begin{align*}
\ell_* \left\|x_{ T_k}^{k}-x_{ T_k}^{*}+d_{ T_k}^{k}\right\| \le C \|x^k - x^*\|^2,
\end{align*}
holds for sufficiently large $k$. It follows from $d_{\overline{T}_k}^k = - x_{\overline{T}_k}^k$ and \eqref{the-2-2-3}, we can see that $\|x^k + d^k - x^*\| = \|x_{ T_k}^{k}-x_{ T_k}^{*}+d_{ T_k}^{k}\|$, leading to the following fact
\begin{align}\label{eq-3.41}
\frac{\|x^k + d^k - x^*\|}{\|x^k - x^*\|} = \frac{\|x_{ T_k}^{k}-x_{ T_k}^{*}+d_{ T_k}^{k}\|}{\|x^k - x^*\|} \le \frac{C \|x^k - x^*\|^2}{ {\ell_*} \|x^k - x^*\|} \le \frac{C}{\ell_*}\|x^k - x^*\|.
\end{align}
Now, we have three facts: \eqref{eq-3.41}, $ x^k \to x^* $ from (1), and $ \left\langle \nabla f(x^k), d^k \right\rangle \le \rho \|d^k \|^2 $ from Lemma \ref{lem-2-1}, which together with \cite[Theorem 3.3]{facchinei1995minimization} allow us to claim that eventually the stepsize $ \alpha_k $ determined by the Armijo rule is 1, namely $ \alpha_k = 1 $. Then, the sequence converges quadratically, completing the proof.
\end{proof}

\section{Numerical results}
In this section, we report the numerical results of the modified block Newton algorithm (MBNL0R) proposed in this paper and compare it with NL0R  \cite{zhou2021newton}\footnote{The code is available via https://github.com/ShenglongZhou/NL0R}, PIHT \cite{lu2014iterative}, MIRL1 \cite{10.1093/imaiai/iaw002}\footnote{The code is available via https://github.com/ShenglongZhou/MIRL1}, PDASC \cite{JIAO2015400}\footnote{The code is available via http://www0.cs.ucl.ac.uk/staff/b.jin/companioncode.html}, HTP \cite{doi:10.1137/100806278}, CoSaMP \cite{NEEDELL2009301}. The code of the algorithm is implemented in MATLAB R2020a and computed on a laptop with AMD Ryzen 7 5800H with Radeon Graphics 3.20 GHz CPU and 16 GB memory. The parameter settings of MBNL0R is the same as  NL0R in \cite{zhou2021newton} except for $\mu_k$. In MBNL0R, we set the regularization parameter $\mu_k = \min (0.1, \|F_{\tau}(x^k, T_k)\|^2)$.

We will report the following results: dimension $n$, number of iterations, computation time (in seconds), and res. Here, res = $\|x^k - x^*\|$, where $x^*$ is the ground truth signal.

\subsection{Randomly Examples on Compressed sensing}
Compressing sensing has seen revolutionary advances both in theory and algorithm development over the past decade. In this subsection, we conducted a comparative analysis with PIHT \cite{lu2014iterative}, MIRL1 \cite{10.1093/imaiai/iaw002}, PDASC \cite{JIAO2015400}, NL0R  \cite{zhou2021newton}. We consider the exact recovery $y = Ax^*$ in \textbf{E1} and \textbf{E2} and the exact recovery $y = Ax^* + \xi$ in \textbf{E3} and \textbf{E4}, where $\xi$ is the noise and $A \in \mathbb{R}^{m\times n}$ will be described in example. The "ground truth" signal $x^*$ are iid samples of the standard normal distribution, and their locations are picked randomly. We set the number of non-zero components $s^* = 0.01n$. $f(x)$ takes the form of $f(x) = \frac{1}{2} \|Ax - y\|^2_2$. We run 20 trials for each example and take the average of the results and round the number of iterations iter to the nearest integer.

\textbf{E1} Let $A \in \mathbb{R}^{m\times n}$ are random Gaussian matrix. We consider the exact recovery $y = Ax^*$.

\textbf{E2} Let $A = BC \in \mathbb{R}^{n\times n}$ where $B \in \mathbb{R}^{n\times m}$, $C\in \mathbb{R}^{m\times n}$ are random Gaussian matrix. We consider the exact recovery $y = Ax^*$.

\textbf{E3} Let $A = BC \in \mathbb{R}^{n\times n}$ where $B \in \mathbb{R}^{n\times m}$, $C\in \mathbb{R}^{m\times n}$ are random Gaussian matrix. We consider the exact recovery $y = Ax^* + c\xi$, where $\xi$ is white Gaussian noise which code by \text{randn(m,1)} and noise level $c$ is set to 0.001.

\textbf{E4} Let $A \in \mathbb{R}^{m\times n}$ are random Gaussian matrix. We consider the exact recovery $y = Ax^* + c\xi$, where $\xi$ is white Gaussian noise which code by \text{randn(m,1)} and noise level $c$ is set to 0.001.

\begin{figure}[h]
	\centering
	\includegraphics[width=6cm]{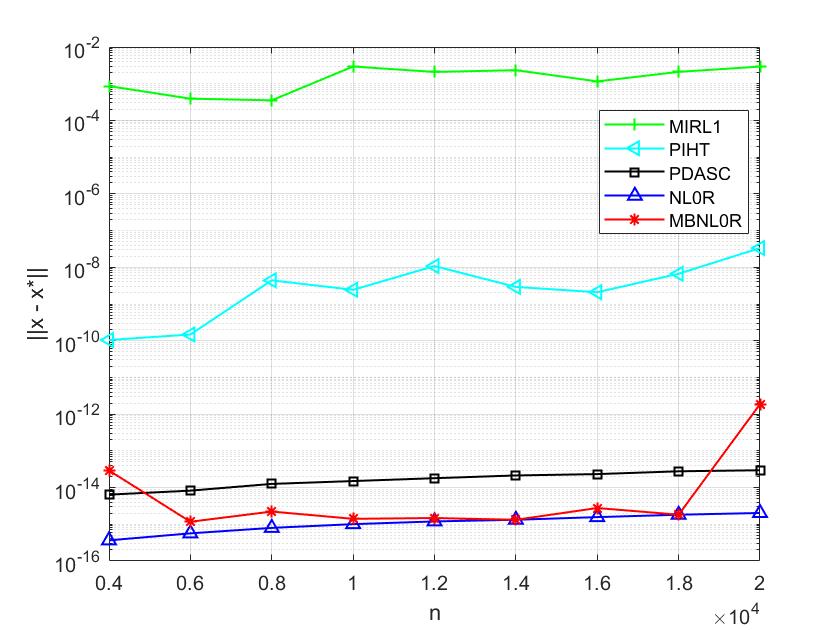
}
    \includegraphics[width=6cm]{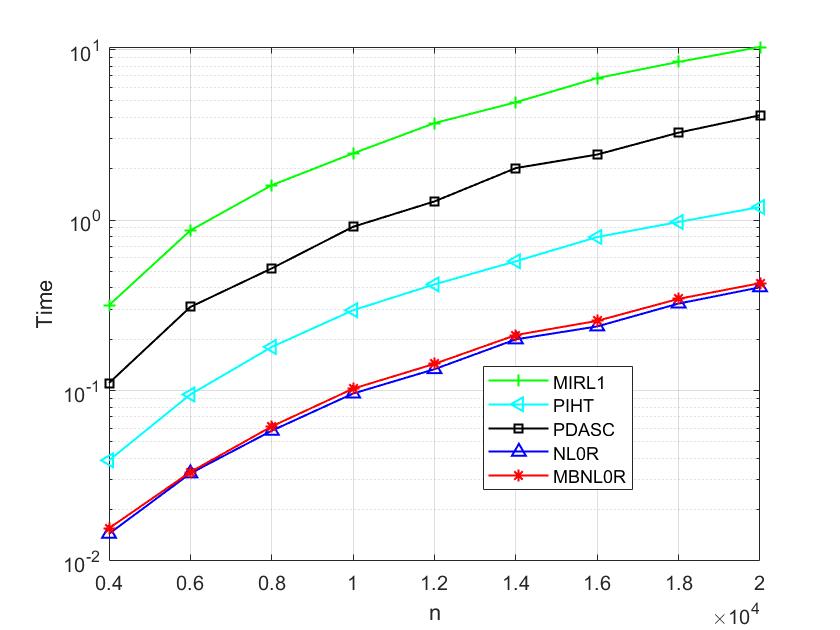
}
	\caption{Average recovery results for \textbf{E1} with $m = 0.25n$}
	\label{fig1}
\end{figure}

\begin{figure}[h]
	\centering
	\includegraphics[width=6cm]{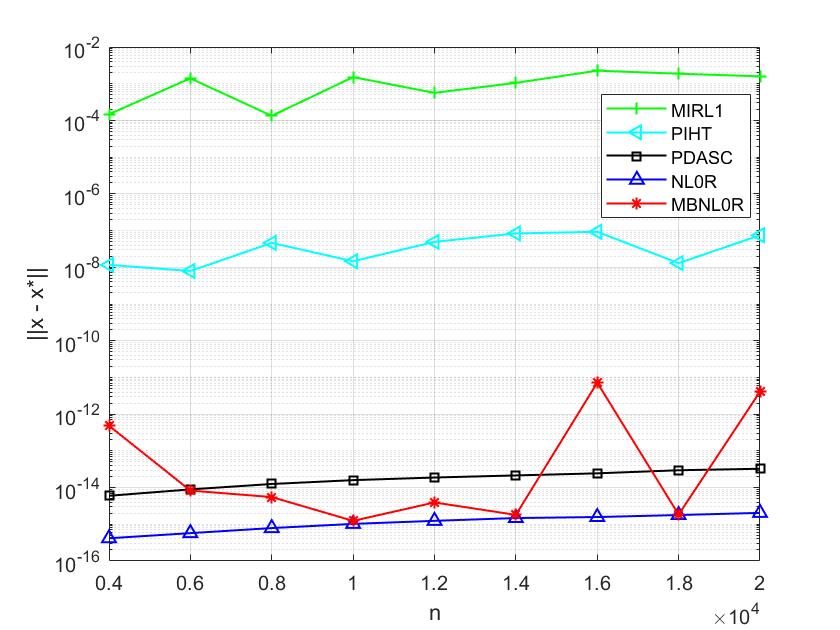
}
    \includegraphics[width=6cm]{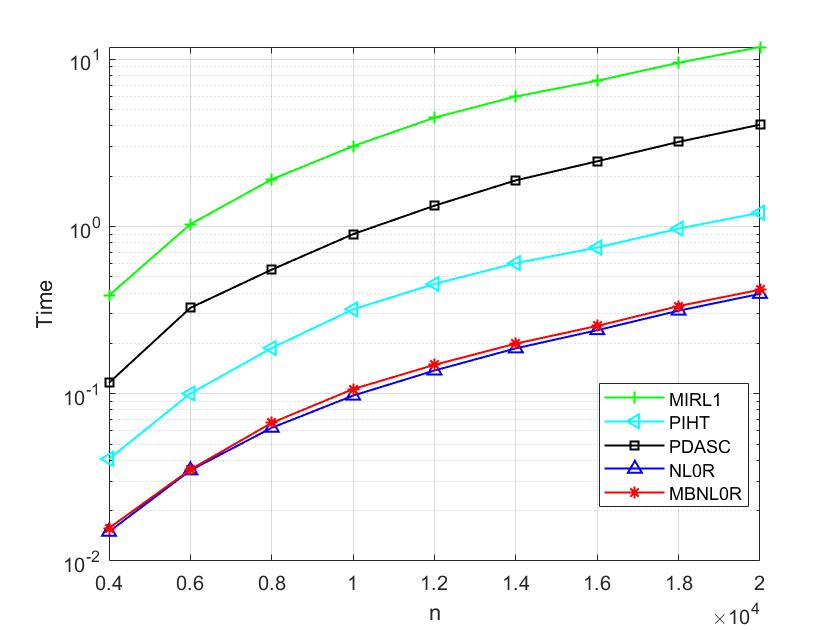
}
	\caption{Average recovery res and time for \textbf{E2} with $m = 0.25n$}
	\label{fig2}
\end{figure}

\begin{figure}[h]
	\centering
	\includegraphics[width=6cm]{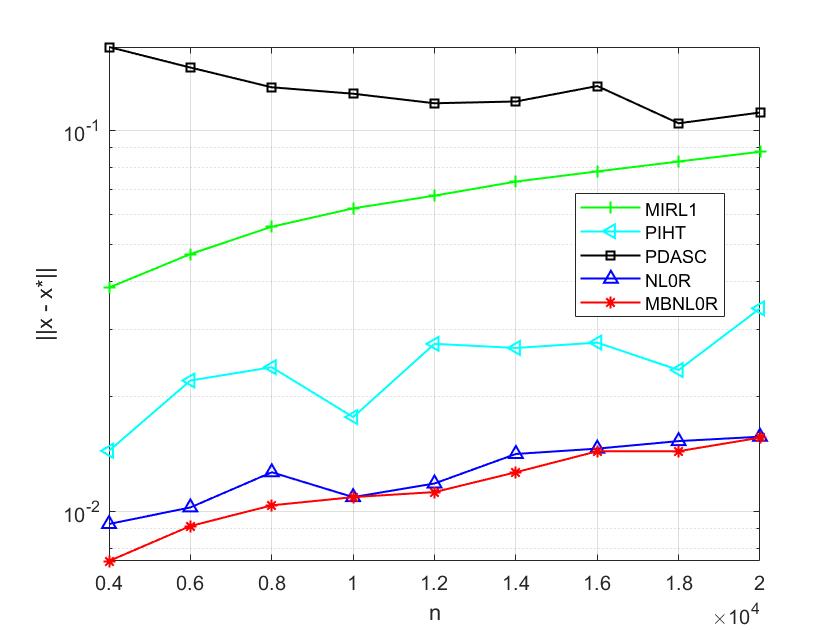
}
    \includegraphics[width=6cm]{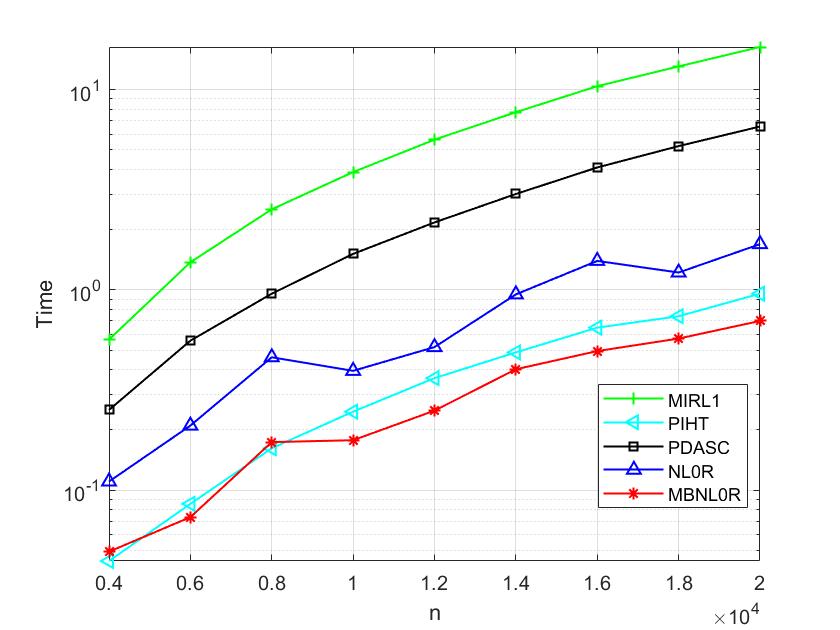
}
	\caption{Average recovery res and time for \textbf{E3}}
	\label{fig3}
\end{figure}

\begin{figure}[h]
	\centering
	\includegraphics[width=6cm]{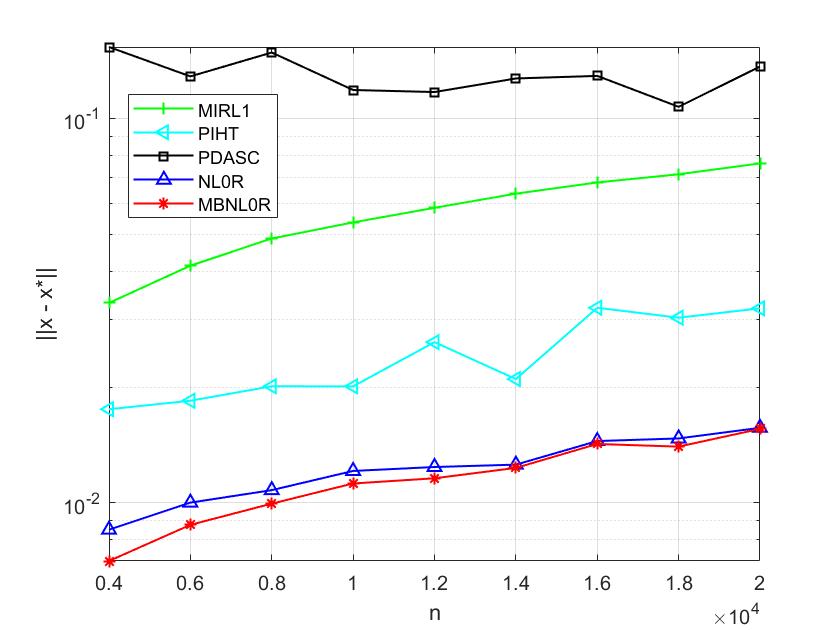
}
    \includegraphics[width=6cm]{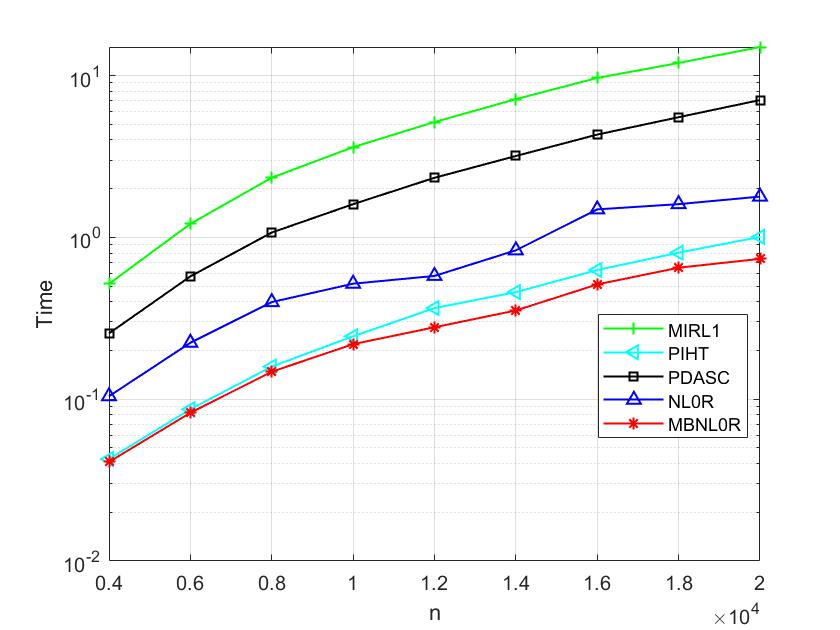
}
	\caption{Average recovery res and time for \textbf{E4} with $m = 0.25n$}
	\label{fig4}
\end{figure}

\begin{table*}[h]
\scriptsize
\centering
\caption{\text{Results on \textbf{E3} with $m = 0.25n$}}\label{Tab03}
\begin{tabular*}{\textwidth}{@{\extracolsep\fill}ccccrcccr}
\toprule
\textbf{Algorithm}   &{n} &  iter     &  time   &   res
&{n}  &  iter      &  time   &   res \\
 \midrule
\text{MIRL1} & 6000  & 02  & 1.37  & 4.73e-02 & 8000  & 02  & 2.53  & 5.58e-02 \\
\text{PIHT} & 6000  & 15  & 0.09  & 2.20e-02 & 8000  & 14  & 0.16  & 2.39e-02 \\
\text{PDASC} & 6000  & 51  & 0.21  & 1.46e-01 & 8000  & 51  & 0.96  & 1.30e-01 \\
\text{NL0R} & 6000  & 20  & 0.21  & 1.02e-02 & 8000  & 19  & 0.46  & 1.26e-02 \\
\text{MBNL0R} & 6000  & 18  & 0.07  & 9.14e-03 & 8000  & 21  & 0.17  & 1.04e-02 \\
 \midrule
\text{MIRL1} & 10000  & 02  & 3.88  & 6.23e-02 & 12000  & 02  & 5.63  & 6.73e-02 \\
\text{PIHT} & 10000  & 14  & 0.25  & 1.77e-02 & 12000  & 14  & 0.36  & 2.75e-02 \\
\text{PDASC} & 10000  & 52  & 0.39  & 1.25e-01 & 12000  & 52  & 2.17  & 1.18e-01 \\
\text{NL0R} & 10000  & 18  & 0.39  & 1.09e-02 & 12000  & 17  & 0.52  & 1.18e-02 \\
\text{MBNL0R} & 10000  & 16  & 0.18  & 1.09e-02 & 12000  & 16  & 0.25  & 1.12e-02 \\
 \midrule
\text{MIRL1} & 14000  & 02  & 7.72  & 7.33e-02 & 16000  & 02  & 10.41  & 7.79e-02 \\
\text{PIHT} & 14000  & 14  & 0.49  & 2.68e-02 & 16000  & 14  & 0.65  & 2.77e-02 \\
\text{PDASC} & 14000  & 52  & 0.95  & 1.19e-01 & 16000  & 52  & 4.09  & 1.31e-01 \\
\text{NL0R} & 14000  & 18  & 0.95  & 1.41e-02 & 16000  & 20  & 1.40  & 1.46e-02 \\
\text{MBNL0R} & 14000  & 18  & 0.40  & 1.26e-02 & 16000  & 17  & 0.49  & 1.44e-02 \\
 \midrule
\text{MIRL1} & 18000  & 02  & 13.06  & 8.28e-02 & 20000  & 02  & 16.33  & 8.78e-02 \\
\text{PIHT} & 18000  & 14  & 0.74  & 2.35e-02 & 20000  & 15  & 0.96  & 3.41e-02 \\
\text{PDASC} & 18000  & 53  & 1.22  & 1.04e-01 & 20000  & 52  & 6.54  & 1.11e-01 \\
\text{NL0R} & 18000  & 19  & 1.22  & 1.53e-02 & 20000  & 19  & 1.69  & 1.57e-02 \\
\text{MBNL0R} & 18000  & 17  & 0.57  & 1.44e-02 & 20000  & 17  & 0.70  & 1.56e-02 \\
\bottomrule
\end{tabular*}
\end{table*}

\begin{table*}[h]
\scriptsize
\centering
\caption{\text{Results on \textbf{E4} with $m = 0.25n$}}\label{Tab04}
\begin{tabular*}{\textwidth}{@{\extracolsep\fill}ccccrcccr}
\toprule
\textbf{Algorithm}   &{n} &  iter     &  time   &   res
&{n}  &  iter      &  time   &   res \\
\midrule
\text{MIRL1} & 6000  & 02  & 1.21  & 4.14e-02 & 8000  & 02  & 2.34  & 4.87e-02 \\
\text{PIHT} & 6000  & 14  & 0.09  & 1.84e-02 & 8000  & 14  & 0.16  & 2.01e-02 \\
\text{PDASC} & 6000  & 52  & 0.22  & 1.29e-01 & 8000  & 53  & 1.07  & 1.48e-01 \\
\text{NL0R} & 6000  & 19  & 0.22  & 1.00e-02 & 8000  & 20  & 0.40  & 1.08e-02 \\
\text{MBNL0R} & 6000  & 18  & 0.08  & 8.76e-03 & 8000  & 17  & 0.15  & 9.95e-03 \\
 \midrule
\text{MIRL1} & 10000  & 02  & 3.61  & 5.36e-02 & 12000  & 02  & 5.16  & 5.85e-02 \\
\text{PIHT} & 10000  & 14  & 0.24  & 2.01e-02 & 12000  & 14  & 0.37  & 2.62e-02 \\
\text{PDASC} & 10000  & 53  & 0.52  & 1.19e-01 & 12000  & 53  & 2.33  & 1.17e-01 \\
\text{NL0R} & 10000  & 20  & 0.52  & 1.21e-02 & 12000  & 18  & 0.58  & 1.24e-02 \\
\text{MBNL0R} & 10000  & 19  & 0.22  & 1.12e-02 & 12000  & 17  & 0.28  & 1.16e-02 \\
 \midrule
\text{MIRL1} & 14000  & 02  & 7.14  & 6.37e-02 & 16000  & 02  & 9.66  & 6.81e-02 \\
\text{PIHT} & 14000  & 14  & 0.46  & 2.10e-02 & 16000  & 14  & 0.63  & 3.22e-02 \\
\text{PDASC} & 14000  & 53  & 0.83  & 1.27e-01 & 16000  & 52  & 4.32  & 1.29e-01 \\
\text{NL0R} & 14000  & 19  & 0.83  & 1.26e-02 & 16000  & 20  & 1.49  & 1.45e-02 \\
\text{MBNL0R} & 14000  & 16  & 0.35  & 1.23e-02 & 16000  & 17  & 0.51  & 1.42e-02 \\
 \midrule
\text{MIRL1} & 18000  & 02  & 11.96  & 7.16e-02 & 20000  & 02  & 15.00  & 7.64e-02 \\
\text{PIHT} & 18000  & 15  & 0.80  & 3.03e-02 & 20000  & 15  & 1.01  & 3.20e-02 \\
\text{PDASC} & 18000  & 53  & 1.60  & 1.07e-01 & 20000  & 53  & 7.05  & 1.36e-01 \\
\text{NL0R} & 18000  & 19  & 1.60  & 1.47e-02 & 20000  & 20  & 1.79  & 1.57e-02 \\
\text{MBNL0R} & 18000  & 17  & 0.65  & 1.40e-02 & 20000  & 17  & 0.74  & 1.56e-02 \\
\bottomrule
\end{tabular*}
\end{table*}

\begin{figure}[h]
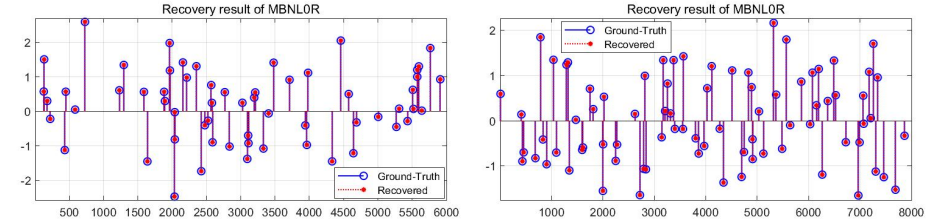

	\centering
	\includegraphics[width=6cm]{recovery_result_6000.jpg
}
    \includegraphics[width=6cm]{recovery_result_8000.jpg
}
	\caption{Recovery result for \textbf{E4} $m = 0.25n$}
	\label{fig8}
\end{figure}
The results for \textbf{E1} to \textbf{E4} are reported in Table \ref{Tab03} - \ref{Tab04} and Fig. \ref{fig1} - Fig. \ref{fig4}. From Fig. \ref{fig1} and Fig. \ref{fig2}, it shows that the performance of our proposed method is close to NL0R. In \textbf{E1}, both MBNL0R and NL0R significantly outperform CoSaMP, HTP, and PDASC in terms of computational efficiency and accuracy. In \textbf{E2}, NL0R shows to be the most efficient method. Although MBNL0R exhibits slightly lower accuracy than NL0R, its computational time remains comparable to that of NL0R. As can be observed from Figs. \ref{fig3} and \ref{fig4}, MBNL0R demonstrates a more pronounced advantage in \textbf{E3} and \textbf{E4}.
It shows that, both MBNL0R and NL0R maintain superior computational accuracy among contemporary algorithms, with MBNL0R demonstrating a modest improvement for NL0R. As evidenced in Fig. \ref{fig3} and \ref{fig4}, MBNL0R achieves the most significant reduction in computational time among all compared algorithms, demonstrating particularly remarkable improvement over NL0R. Fig. \ref{fig8} demonstrates the reconstruction performance of MBNL0R on \textbf{E4}, with the left panel displaying results for $n=6000$ and the right panel for $n=8000$. As evident from Fig. \ref{fig8}, the solution obtained by MBNL0R exhibits excellent agreement with the ground truth signal $x^*$.
These further highlight the robustness and effectiveness of MBNL0R in noisy signal recovery scenarios.

\subsection{2-D Image Examples}
\textbf{E5} (2-D image data \cite{zhou2021newton})
Some images are naturally not sparse themselves but can be sparse under some wavelet transforms. Here, we take advantage of the Daubechies wavelet 1, denoted as $W(\cdot)$. Then the images under this transform (i.e., $x^* := W(\omega)$) are sparse, and $\omega$ is the vectorized intensity of an input image. Therefore, the explicit form of the sampling matrix may not be available. We consider the exact recovery $y = Ax^* + \xi$. $f(x)$ takes the form of $f(x) = \frac{1}{2} \|Ax - y\|^2_2$. We consider a sampling matrix taking the form $A = FW^{-1}$, where $F$ is the partial fast Fourier transform, and $W^{-1}$ is the inverse of $W$. Finally, the added noise $\xi$ has each element $\xi_i \sim n_f \cdot N$ where $N$ is the standard normal distribution and $n_f$ is the noise factor. Three typical choices of $n_f$ are considered, namely $n_f \in \{0.01, 0.1\}$. For this experiment, we compute a gray image (see the original image in Fig. \ref{fig5}) with size $512 \times 512$ (i.e., $n = 512^2 = 262,144$) and the sampling sizes $m = 20,033$. We compute the peak signal to noise ratio (PSNR) defined by $\text{PSNR} := 10\log_{10}(n\|x - x^*\|^{-2})$ to measure the performance of the method. Note that a larger PSNR implies a better performance.

\begin{figure}[htbp]
	\centering
	\includegraphics[width=10cm]{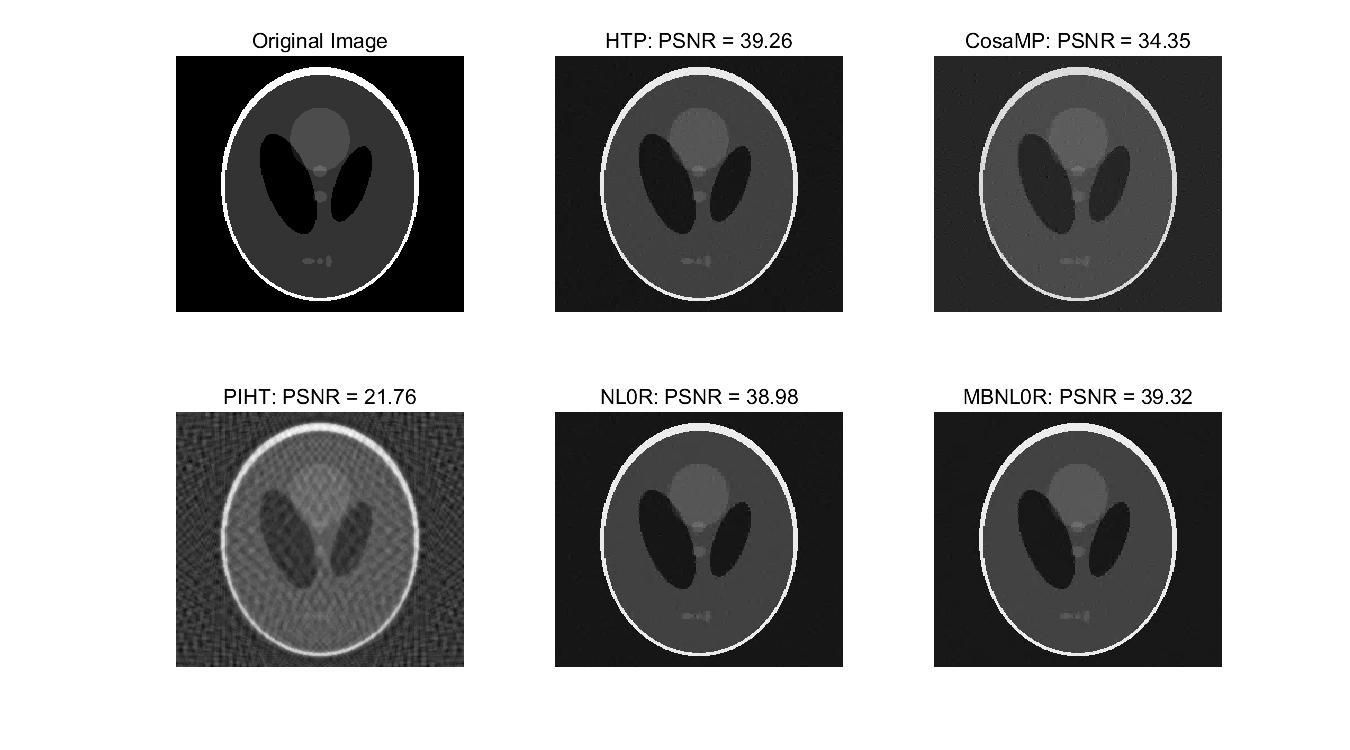
}
	\caption{Recovery results for \textbf{E5} with $nf = 0.01$}
	\label{fig5}
\end{figure}

\begin{figure}[htbp]
	\centering
	\includegraphics[width=10cm]{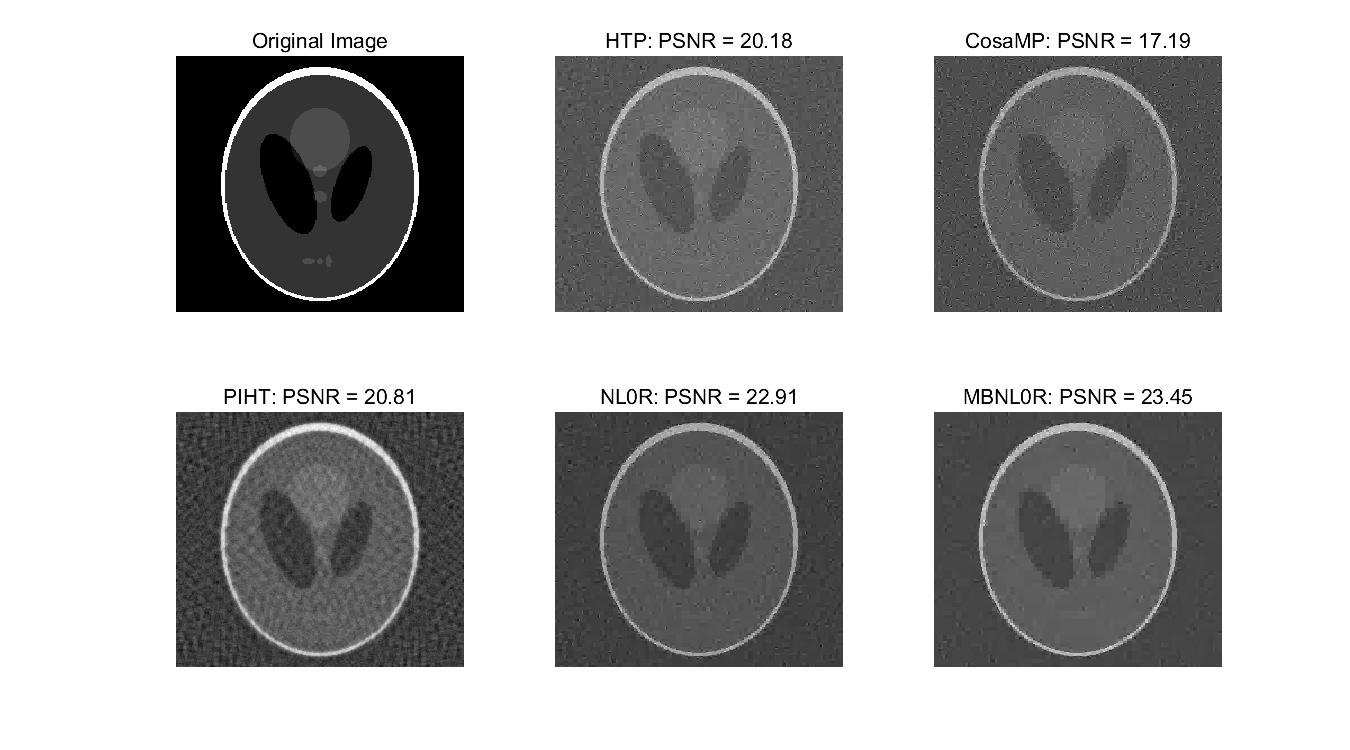
}
	\caption{Recovery results for \textbf{E5} with $nf = 0.1$}
	\label{fig6}
\end{figure}

\begin{table*}[htbp]
\scriptsize
\centering
\caption{\text{Results on 2-D image examples}}\label{Tab05}
\begin{tabular*}{\textwidth}{@{\extracolsep\fill}ccccccc}
\toprule
\  & \multicolumn{3}{c}{0.01} & \multicolumn{3}{c}{0.1} \\
\cmidrule(r){2-4} \cmidrule(r){5-7}
{Algorithm} &  PSNR      &  time   &   $\|x\|_0$

&  PSNR      &  time   &   $\|x\|_0$ \\
\midrule
HTP     & 39.26  & 2.55  & 4000  & 20.18 & 2.83  & 4000   \\

CoSaMP  & 34.35  & 5.48  & 4000  & 17.19  & 5.54  & 4000   \\

PIHT    & 21.76  & 1.23e-1  & 28124  & 20.81  & 8.18e-2  & 22298
\\

NL0R     & 38.98  & 7.80e-1  & 3939  & 22.91 & 1.80e-1  & 3281   \\

MBNL0R     & 38.32  & 6.87e-1  & 3414  & 23.45 & 1.06e-1  & 1954   \\
\bottomrule
\end{tabular*}
\end{table*}
In this example, we select four methods CoSaMP \cite{NEEDELL2009301}, HTP \cite{doi:10.1137/100806278}, PIHT \cite{lu2014iterative} and NL0R \cite{zhou2021newton}. We first run NL0R for noise factor $n_f = 0.01$ that can deliver a solution with good sparsity level which approximately 4000. Then, we set the sparsity level $s = 4000$ for CoSaMP and HTP since they need such prior information. The results for \textbf{E5} are presented in Table \ref{Tab05} and Fig. \ref{fig5} - Fig. \ref{fig6}. From Table \ref{Tab05}, it shows that our proposed method has a slight advantage in terms of time and PSNR for \textbf{E5} with noise factor $n_f = 0.1$.

\subsection{Sparse linear complementarity problem}
Sparse linear complementarity problems have been applied in real-world applications such as bimatrix games and portfolio selection problems. These problems aim to find a sparse vector $x\in \mathbb{R}^{n}$ from $\Omega := \{ x\in \mathbb{R}^{n}\ : x\ge 0,\ Mx + q \ge 0,\ \langle x, Mx + q \rangle = 0 \}$, where $M \in \mathbb{R}^{n\times n}$ and $q\in \mathbb{R}^{n}$. A point $x\in \Omega$ is equivalent to
\begin{align}
f(x) := \sum_{i=1}^{n}\phi(x_i, M_i x + q_i) = 0
\end{align}
where $M_i$ is the $i$th row of $M$ and $\phi$ is the so-called NCP function that is defined by $\phi(a,b) = 0$ if and only if $a \ge 0,\ b\ge 0,\ ab = 0$. We choose an NCP function $\phi(a,b) = a_{+}^2 b_{+}^2 + (-a)_{+}^2 + (-b)_{+}^2$ which is the same as in \cite{zhou2021newton}.

\textbf{E6} \cite{zhou2021newton} Let $M = ZZ^{\top}$ with $Z \in \mathbb{R}^{n \times m}$ and $m \leq n$ (e.g., $m = n/2$). Elements of $Z$ are i.i.d. samples from the standard normal distribution. Each column is then normalized to have unit length and $q$ is obtained by
\begin{align*}
q_i =
\begin{cases}
-(Mx^*)_i & \text{if } x^*_i > 0 \\
|(Mx^*)_i| & \text{otherwise}
\end{cases}
\end{align*}
We run 20 trials for each example and take the average of the results and round the number of iterations iter to the nearest integer.

\begin{figure}[htbp]
	\centering
	\includegraphics[width=6cm]{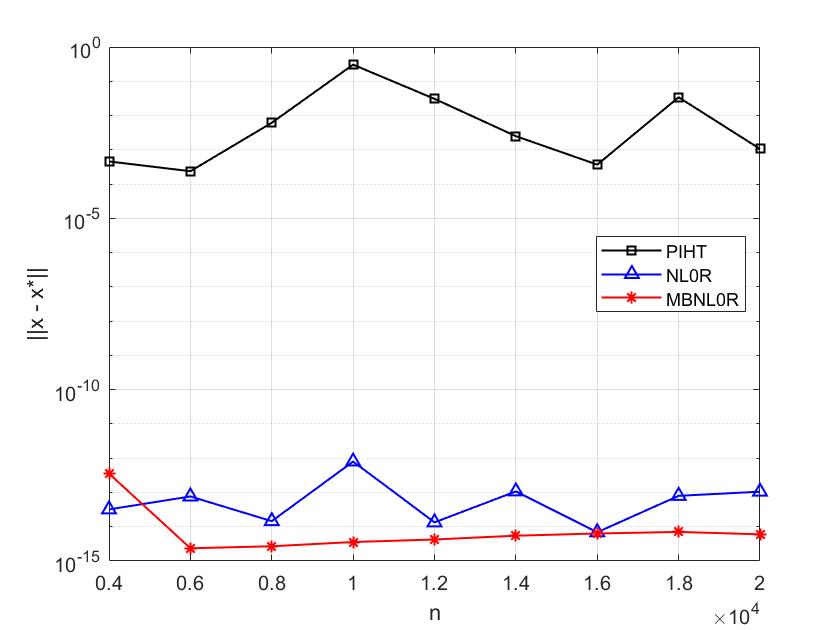
}
	\includegraphics[width=6cm]{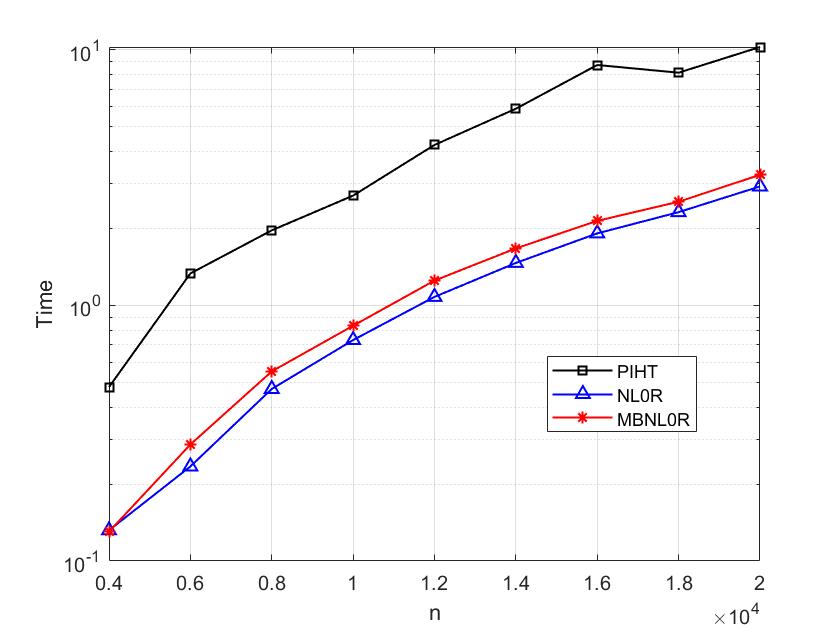
}
	\caption{Average recovery error and time for \textbf{E6}}
	\label{fig7}
\end{figure}

\begin{table*}[htbp]
\scriptsize
\centering
\caption{\text{Results on \textbf{E6}}}\label{Tab06}
\begin{tabular*}{\textwidth}{@{\extracolsep\fill}ccccrcccr}
\toprule
\textbf{Algorithm}   &{n} &  iter     &  time   &   res
&{n}  &  iter      &  time   &   res \\
 \midrule
\text{PIHT} & 6000  & 23  & 1.33  & 2.37e-04 & 8000  & 19  & 1.97  & 6.16e-03 \\
\text{NL0R} & 6000  & 08  & 0.23  & 7.65e-14 & 8000  & 09  & 0.47  & 1.45e-14 \\
\text{MBNL0R} & 6000  & 09  & 0.29  & 2.36e-15 & 8000  & 10  & 0.55  & 2.69e-15 \\
 \midrule
\text{PIHT} & 10000  & 16  & 2.69  & 3.05e-01 & 12000  & 17  & 4.24  & 3.07e-02 \\
\text{NL0R} & 10000  & 09  & 0.73  & 7.98e-13 & 12000  & 09  & 1.08  & 1.34e-14 \\
\text{MBNL0R} & 10000  & 10  & 0.83  & 3.58e-15 & 12000  & 10  & 1.25  & 4.24e-15 \\
 \midrule
\text{PIHT} & 14000  & 19  & 5.88  & 2.50e-03 & 16000  & 23  & 8.71  & 3.69e-04 \\
\text{NL0R} & 14000  & 09  & 1.47  & 1.07e-13 & 16000  & 09  & 1.91  & 6.91e-15 \\
\text{MBNL0R} & 14000  & 10  & 1.67  & 5.48e-15 & 16000  & 10  & 2.14  & 6.36e-15 \\
 \midrule
\text{PIHT} & 18000  & 16  & 8.13  & 3.41e-02 & 20000  & 18  & 10.25  & 1.06e-03 \\
\text{NL0R} & 18000  & 09  & 2.32  & 7.98e-14 & 20000  & 09  & 2.92  & 1.05e-13 \\
\text{MBNL0R} & 18000  & 10  & 2.54  & 7.05e-15 & 20000  & 10  & 3.24  & 6.01e-15 \\
\bottomrule
\end{tabular*}
\end{table*}
In this example, we select two methods: PIHT \cite{lu2014iterative} and NL0R \cite{zhou2021newton}.
The results for \textbf{E6} is reported in Table \ref{Tab06} and Fig. \ref{fig7}. We vary the sample size $n$ but fix $m = n/2$, and set the number of non-zero components $s^* = 0.01n$. From Table \ref{Tab06} and Fig. \ref{fig7}, it shows that, both NL0R and MBNL0R exhibit substantially higher computational efficiency than PIHT. Particularly, MBNL0R achieves a little advantages over NL0R in terms of computational accuracy and time efficiency.

\section{Conclusion}
In this paper, we proposed a globally convergent modified block Newton method(MBNL0R) for $\ell_0$ regularized optimization. The main difference between MBNL0R and NL0R \cite{zhou2021newton} lies in the use of approximate Jacobian matrix and the regularization. We prove that the proposed method is globally convergent. Leveraging the special structural properties of the NL0R method, we established the quadratic convergence rate for MBNL0R. The numerical results reveal the efficiency of our proposed method.

\section*{Funding}
This work is supported by the National Natural Science Foundation of China  [grant number 12071032], [grant number 12271526].

\bibliography{sn-bibliography}%

\end{document}